\newcommand{\C}{\mathbb{C}}
\newcommand{\QQ}{\mathbb{Q}}
\newcommand{\Q}{\mathbb{Q}}
\newcommand{\Z}{\mathbb{Z}}
\newcommand{\mQ}{\mathcal{Q}}
\newcommand{\mR}{\mathcal{R}}
\newcommand{\mH}{\mathcal{H}}
\newcommand{\ord}{\rm{ord}}
\newcommand{\PP}{\mathbb{P}}
\newcommand{\of}{\mathcal{O}}
\newcommand{\F}{\mathcal{F}}
\newcommand{\W}{\bigwedge}
\newcommand{\ii}{\mathrm{i}}
\newcommand{\contr}{\lrcorner}
\DeclareMathOperator{\Ext}{Ext}
\DeclareMathOperator{\Ker}{Ker}
\DeclareMathOperator{\Sym}{Sym}
\DeclareMathOperator{\Gr}{Gr}
\DeclareMathOperator{\Hom}{Hom}
\newtheorem{thm}{Theorem}[section]
\newtheorem{corollary}[thm]{Corollary}
\newtheorem{lemma}[thm]{Lemma}
\newtheorem{proposition}[thm]{Proposition}
\newtheorem{definition}[thm]{Definition}
\newtheorem{rmk}[thm]{Remark}
\newtheorem*{aim*}{Aim of this paper}
\def\l@subsection{\@tocline{1}{0,2pt}{2pc}{8mm}{\ \ }} 
\def\l@section{\@tocline{1}{0,2pt}{2pc}{8mm}{\ \ }} 
\author{Lev Borisov}
\address{Department of Mathematics\\ Rutgers University\\ Piscataway, NJ 08854}
\email[L.~Borisov]{borisov@math.rutgers.edu}
\author{Enrico Fatighenti }
\address{Department of Mathematical Sciences\\
Loughborough University\\
  LE113TU, UK}
\email[E.~Fatighenti]{e.fatighenti@lboro.ac.uk}
\title[New explicit constructions of surfaces of general type]{New explicit constructions of surfaces of general type}
\begin{document}
\begin{abstract}
We discover a simple construction of a four-dimensional family of smooth surfaces of general type with $p_g(S)=q(S)=0$, $K^2_S=3$ with cyclic fundamental group $C_{14}$. We use a degeneration of the surfaces in this family to find (complicated) explicit equations of six new pairs of fake projective planes. Our methods for finding new fake projective planes involve nontrivial computer calculations which we hope will be applicable in other settings.
\end{abstract}
\maketitle

\section{Introduction}

Classification of surfaces of general type is one of the most active areas of algebraic geometry. Many examples are known, but a detailed classification is still lacking, and multiple hard problems remain open. 

\medskip
The study of the birational class of a surface $S$ is often reduced to the study of its minimal model. This is especially effective when the Kodaira dimension satisfies $k(S) \geq 0$, since in this case the minimal model is unique. 
Surfaces of general type have maximal (that is, 2) Kodaira dimension. The number of different deformation families is infinite, but still very few examples are known.

\medskip
 To each minimal surface $S$ of general type one associates a triple of numerical invariants, $(p_g, q, K^2_S)$, where $p_g:=h^0(S, K_S)$ and $q:=h^1(S, \of_S)$. These integers determine all other classical numerical invariants, such as $e_{\textrm{top}}(S)=12\chi(\of_S)-K^2_S$ and $P_{m}(S):=h^0(S, mK_S)=\chi(\of_S) + {m \choose 2} K^2_S$. Two widely used ways to produce surfaces of general type are complete intersections of sufficiently high multi-degree or products of curves of genus $g \ge 2$. The resulting surfaces have either large $p_g$ or large $q$. This is a particular manifestation of  a more general phenomenon: producing examples of surfaces of general type with low $p_g$ and $q$ is indeed quite difficult, and a complete classification appears beyond currently available techniques. The most extreme case is that of   $p_g=q=0$.  Surfaces with such invariants are amongst the most famous, since they historically represented counterexamples to the famous Max Noether's conjecture, that stated that any surface $S$ with these prescribed invariants needs to be rational. The  \emph{Bogomolov-Miyaoka-Yau} inequality, that states $ K^2_S \leq 9 \chi(\of_S)$, implies $K^2_S \leq 9$. 

\medskip
The first example of such a surface is due to Godeaux, and is realized as the quotient $Y_5/C_5$, where $Y_5 \subset \PP^3$ is a quintic surface in $\PP^3$ on which the cyclic group $C_5$ acts freely. Surfaces with $p_g=q=0, K^2_S=1$ are therefore called \emph{(numerical) Godeaux surfaces}. Similarly one can construct explicit examples of a surface with $p_g=q=0, K^2_S=2$ as quotients by a $C_7$ action. Surfaces with these prescribed invariants are called \emph{(numerical) Campedelli surfaces}. For $3\leq K^2_S\leq  8$ the situation is in general much less understood. For a recent survey on the surfaces of general type we refer to \cite{bauercatanese}.

\medskip
The extreme case of surfaces with $p_g=q=0$ and $K_S^2=9$ is that of  the famous fake projective planes, i.e. surfaces of general type with Hodge diamond equal to that of $\C\PP^2$. First example of such surface has been given by Mumford \cite{Mumford}. Subsequent work of multiple authors \cite{Klingler, Ishida, Keum06, KK, PY, PY2, cs} culminated in the classification by Cartwright and Steger \cite{cs} which found that there are exactly $50$ complex conjugate pairs of fake projective planes. These planes are computed as free quotients of the two-dimensional complex ball by explicit arithmetic groups.

\medskip Cartwright and Steger  observed that one can obtain many  families of surfaces with $p_g=q=0$ and $K_S^2=3$ as smooth deformations of the quotients of the fake projective planes by a $C_3$ action. In particular, one should find a family of surfaces with $K_S^2=3$ with the fundamental groups $C_{14}$. We have stumbled upon such family in our research and then used it to construct a fake projective plane with first homology $C_{14}$ and symmetry group $C_3 \times C_3$. Afterwards, we found explicit equations of five more (pairs of) fake projective planes in the same class.

\medskip

\medskip
{\bf Computer-based approach to constructing specific surfaces of general type.}
Modern software and hardware have made possible breakthroughs in the problems of explicitly constructing surfaces of general type.
In particular, in \cite{BorKeum}, the authors have constructed the first equations of a fake projective plane; in \cite{by} the authors found the equations of a related Cartwright-Steger surface. The results of \cite{BorKeum,by} were subsequently used in \cite{Rito} and \cite{Y19} to fill a gap in the proof of the paper \cite{Y17} on
surfaces with maximum degree of the canonical map.
However, these computational  techniques are still in their infancy, and constructions typically require subtle geometric ideas in order to succeed. One can view this paper as another successful step in developing this emerging field.

\medskip
The paper is organized as follows. We start in Section \ref{sec.2} with an almost classical, but apparently novel, observation that one can construct surfaces with $p_g=q=0, ~K^2=3$ and cyclic fundamental group $C_{14}$ as free quotients of complete intersections of seven special Pl\"ucker hyperplanes in $\Gr(3,V_6)$. 
In Section \ref{sec.3} we follow the remarks of \cite{Keum.comment} to find a quotient of a fake projective plane by $C_3$. In Section \ref{sec.4} we explain the key step that allowed us (with great difficulty) to recover the above fake projective plane. In Section \ref{sec.5} we describe the ensuing construction and the (computer-based) verification of the statement that the surface we found is indeed an FPP. Section \ref{sec.6} explains how we managed to recover five other pairs of FPPs in the same commensurability class. Finally, Section \ref{sec.7} contains a long list of further directions that are naturally inspired by our calculation.

\medskip
{\bf Acknowledgments.} We thank JongHae Keum and Gopal Prasad for interest in this work, and John Cremona for allowing us the use of the Number Theory server at Warwick. L.B. has been partially supported by the National Science Foundation grant DMS-1601907, E.F. has been supported by an EPSRC Doctoral Prize Fellowship based at Loughborough University.

\section{A family of surfaces with $p_g=q=0$ and $K^2=3$}\label{sec.2}
The first construction  of our paper produces  a family of surfaces $S$ of general type with $p_g(S)=q(S)=0$, $K^2_S=3$ and cyclic fundamental group $C_{14}$. This will be done by considering of a family of surfaces $W \subset \PP^{12}$ such that $q(W)=0$, $p_g(W)=13$, $K^2_W=42$ which are equipped with a free action of  $C_{14}$. The surface $S$ will then be realized as the quotient $S=W/C_{14}$. A surface with the same invariants as $W$ appeared in two recent works \cite{by} and \cite{ef}, constructed in different ways. Understanding the connection between these approaches  was the initial motivation behind this project.

\medskip
Let $V_6$ be a complex vector space of dimension $6$ and $V^\vee$ be its dual, with the basis $x_1,\ldots, x_6$. We equip $V^\vee$ with the action of the cyclic group $C_7$ with the generator acting by $x_i\stackrel{\rho}{\mapsto} \varepsilon^i x_i$ where $\varepsilon$ is a primitive $7$-th rooth of $1$.  This action induces a natural action on $\W^3 V_6^\vee$ by $x_i \wedge x_j \wedge x_k\mapsto \varepsilon^{i+j+k}x_i \wedge x_j \wedge x_k$. It is easy to see that $\Gr(3, V_6)$ in its Pl\"ucker embedding is preserved under the induced action on $\PP \W^3 V_6$.

\medskip
The vector space $\W^3 V_6^\vee$ splits into the eigenspaces with respect to the weights for the $C_7$ action. In the table below and for the rest of the paper we denote by $x_{ijk}:=x_i \wedge x_j \wedge x_k$. The basis of each eigenspace is given by the corresponding column of the table.
\begin{center}

\begin{tabular}{@{} *2l @{}*2l @{}*2l @{}*2l @{}*2l @{}*2l @{}*2l }    \toprule
0& &1& &2& &3& &4& &5& &6 \\\midrule
$x_{124}$&& $x_{125}$ && $x_{126}$ && $x_{136}$&& $x_{245}$&& $x_{345}$&& $x_{123}$\\
$x_{356}$&& $x_{134}$ && $x_{135}$ && $x_{145}$&& $x_{146}$&& $x_{156}$&& $x_{346}$\\
 && $x_{456}$ && $x_{234}$ && $x_{235}$&& $x_{236}$&& $x_{246}$&& $x_{256}$

\\\bottomrule
 \hline
\end{tabular}
\end{center}

\medskip
Consider now a non-degenerate $C_7$-invariant skew form $\omega$  on $V$ given  by $$ \omega= x_1 \wedge x_6 + x_2 \wedge x_5 + x_4\wedge x_3.$$
We will later use that $\omega$ is invariant under the order three map $(x_1,\ldots, x_6)\mapsto (x_2,x_4,x_6,x_1,x_3,x_5)$ which multiplies the subscript by $2$ modulo $7$. The form $\omega$ determines a splitting of $\W^3 V_6^\vee= U_{6} \oplus U_{14}$, where
\begin{equation}\label{u14} U_{6}:= {\rm Image } ( V_6^\vee \stackrel{\wedge \ \omega}{\longrightarrow} \W^3V_6^\vee),~~~
U_{14}:= \Ker (\W^3 V_6^\vee \stackrel{\contr \ \omega^{-1}}{\longrightarrow} V_6^\vee),\end{equation}
with $\contr \ \omega^{-1}$ denoting the contraction of a 3-vector by the inverse symplectic form $\omega^{-1}\in \W^2 V_6$.
Since $\omega$ is $C_7$ invariant, the action of $C_7$ on $\W^3 V_6^\vee$ induces one on $U_{14}$. Explicitly, $U_{14}$ splits into $2$-dimensional eigenspaces with the basis vectors listed in the table below.
\begin{center}

\begin{tabular}{@{} *2l @{}*2l @{}*2l @{}*2l @{}*2l @{}*2l @{}*2l }   \toprule
0& &1& &2& &3& &4& &5& &6 \\\midrule
$x_{124}$&& $x_{125}+x_{134}$ && $x_{126}-x_{234}$ && $x_{136}-x_{235}$&& $x_{146}-x_{245}$&& $x_{156}-x_{345}$&& $x_{256}+x_{346}$\\
$x_{356}$&& $x_{456}$ && $x_{135}$ && $x_{145}$&& $x_{236}$&& $x_{246}$&& $x_{123}$
\\\bottomrule
 \hline
\end{tabular}
 \end{center}
 
 \medskip\noindent
We get our family of $W$ by the following surprisingly simple definition.
\begin{definition} 
Consider the splitting $U_{14} \cong \bigoplus_{i=0}^6 \mH_i $, where $\mH_i$ denotes the eigenspace with respect to the eigenvalue $\varepsilon^i$. For any choice of $H_i \in \mH_i$ we define $W$ as $$W=\Gr(3,V_6) \cap H_0 \cap \ldots \cap H_6.$$ 
\end{definition}

The motivation behind the above definition is the following.
The skew form $\omega$ induces a canonical involution (the \emph{annihilator involution})  acting  as $(-1)$ on $U_6$ and $1$ on $U_{14}$.
It is easy to see by an explicit calculation that this action is induced from the involution on $\Gr(3,V_6)$ which sends a dimension three subspace to its annihilator with respect to $w$. Together with  $C_7$, this involution generates the cyclic group $C_{14}$. \footnote{Note that, unlike  the case of \cite{ef} the involution is not induced from $V_6$: this is eventually the key to the freeness of the action.}  It is clear that  $W$ is $C_{14}$ invariant and we can and will consider the quotient. We are now ready to state the main result of this section.

\begin{thm}
Let $S:= W/ C_{14}$, for a sufficiently general choice of $H_i$. Then $S$ is a smooth surface of general type with $p_g=q=0$, $K^2=3$ and cyclic fundamental group  of order $14$.
\end{thm}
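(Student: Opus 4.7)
The plan is to first establish the invariants of $W$ under the assumption of smoothness, then verify generic smoothness and freeness of the $C_{14}$-action, and finally transfer the invariants to $S = W/C_{14}$.

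Since $K_{\Gr(3,V_6)} = \mathcal{O}(-6)$ and $W$ is cut by seven hyperplanes, adjunction gives $K_W = \mathcal{O}_W(1)$; in particular $K_W$ is ample, so $W$ is a minimal surface of general type with $K_W^2 = \deg\Gr(3,V_6) = 42$. The Koszul resolution of $\mathcal{O}_W$ twisted by $\mathcal{O}(1)$, combined with Borel--Weil--Bott vanishing for $\mathcal{O}_{\Gr}(k)$ in the range $-5\le k\le -1$, yields $p_g(W) = 20 - 7 = 13$. Iterated Lefschetz hyperplane gives $\pi_1(W) = 1$ and $q(W) = h^{0,1}(W) = 0$, so $\chi(\mathcal{O}_W) = 14$.

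For freeness of the $C_{14}$-action on $W$, the non-identity elements have order $2$, $7$, or $14$, and the order-$14$ fixed locus is contained in the intersection of the order-$2$ and order-$7$ ones; hence it suffices to treat those two cases. The $C_7$-fixed locus in $\Gr(3,V_6)$ is the $20$ coordinate subspaces $p_{ijk} = \langle e_i,e_j,e_k\rangle$. Each $p_{ijk}$ has $C_7$-weight $-(i+j+k)\pmod 7$, so only the eigen-hyperplane $H_m$ with $m\equiv i+j+k$ can constrain it non-trivially; reading off the basis of $U_{14}$ from the table one checks that a generic $H_m$ does not vanish at the corresponding $p_{ijk}$. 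The involution fixes the Lagrangian Grassmannian $\mathrm{LGr}(3,V_6,\omega)$, a smooth sixfold in $\Gr$; since $\dim W + \dim\mathrm{LGr} = 8 < 9$, I would show $W\cap \mathrm{LGr} = \emptyset$ generically by an incidence-variety dimension count on the parameter space $\prod_i \PP(\mH_i)$, the point being that for every $p\in\mathrm{LGr}$ the seven conditions $p\in H_i$ should impose independent constraints, equivalently that no point of $\mathrm{LGr}$ is annihilated by an entire eigenspace $\mH_i$. Both smoothness of $W$ and this freeness condition are open properties, so I would confirm them at one explicit specimen $H_\bullet$ and then invoke openness.

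Granted freeness, $S = W/C_{14}$ is smooth and the cover $\pi\colon W\to S$ is étale of degree $14$, so $K_S$ descends from $K_W$ and remains ample; thus $K_S^2 = 42/14 = 3$ and $\chi(\mathcal{O}_S) = 14/14 = 1$. The pullback $H^1(\mathcal{O}_S)\hookrightarrow H^1(\mathcal{O}_W) = 0$ forces $q(S) = 0$, whence $p_g(S) = 0$. Covering-space theory combined with $\pi_1(W) = 1$ identifies $\pi_1(S) = C_{14}$, and ampleness of $K_S$ gives that $S$ is of general type. The principal obstacle is the Lagrangian Grassmannian step: the linear system $\bigoplus_i \mH_i$ is a constrained $14$-dimensional subsystem of $|\mathcal{O}_{\Gr}(1)|$, so ordinary Bertini does not apply directly, and the emptiness of $W\cap \mathrm{LGr}$ must ultimately be verified at an explicit $H_\bullet$ before openness can take over.
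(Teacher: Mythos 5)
Your argument follows the same overall structure as the paper's: compute the invariants of $W$ via adjunction, Koszul, and Lefschetz; reduce freeness to the order-$2$ and order-$7$ cases; handle the $C_7$-fixed coordinate subspaces by a weight argument; and acknowledge that smoothness of $W$ and emptiness of the order-$2$ fixed locus on $W$ ultimately need verification at a specific $H_\bullet$ (which the paper does by a direct computer check in the $20$ Schubert charts and for the two fixed-point components it calls $\PP^\pm$). Your identification of $\mathrm{Fix}(\iota)\cap\Gr$ with the Lagrangian Grassmannian $\mathrm{LGr}(3,V_6,\omega)$ is a genuine conceptual improvement over the paper, which simply writes down the equations of $\PP^+$ and $\PP^-$ and checks both numerically; connectedness of $\mathrm{LGr}$ even shows a priori that one of the two pieces $\Gr\cap\PP^\pm$ is empty.

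However, the precise criterion you invoke for the incidence-variety count is false. You claim the conditions $H_i(p)=0$ impose independent constraints for \emph{every} $p\in\mathrm{LGr}$, ``equivalently that no point of $\mathrm{LGr}$ is annihilated by an entire eigenspace $\mH_i$.'' Take $U=\langle e_2,e_3,e_6\rangle$: it is isotropic for $\omega = x_1\wedge x_6 + x_2\wedge x_5 + x_4\wedge x_3$, hence Lagrangian, and its Pl\"ucker point $e_2\wedge e_3\wedge e_6$ has $x_{124}=x_{356}=0$, so it is annihilated by all of $\mH_0=\langle x_{124},x_{356}\rangle$. The incidence-variety argument therefore requires a stratified dimension count (over each $p$ the fiber may jump, but the jump loci on $\mathrm{LGr}$ drop in dimension at a compensating rate), not the uniform independence you state. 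Since you already concede that the final step is checked at an explicit $H_\bullet$, this is a repairable imprecision rather than a fatal flaw, but as written the equivalence you assert does not hold.
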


\begin{proof}
We recall that $\Gr(3,V_6)$ is a Fano variety of dimension $9$, index $6$ and degree $42$.  Suppose we can prove that $W$ is a smooth surface with $C_{14}$ acting freely. The adjunction formula implies $K_{W}^2 = 42$, so $K_S^2=3$. We also have $p_g(W)=0$ by the Lefschetz hyperplane theorem, which also implies $p_g(S)=0$. The global sections of the canonical class of $W$ can be identified with $H^0(W,{\mathcal O}(1)) 
\cong \W^3 V_6^\vee / (\C H_0 \oplus \cdots \oplus \C H_6)$, which has dimension $13$. As a result,  $\chi(W,{\mathcal O}_W)=1 + h^{0,2}(W) = 14$, so $\chi(S,{\mathcal O}_S)=1$ which leads to $q(S)=0$.

\medskip
We will now verify the technical statement that $W$ is smooth for general choices of $H_i\in \mH_i$, and moreover that the action of $C_{14}$ on $W$ is free.  This can be easily accomplished by a computer calculation as below, but it would be interesting to find a computer-free argument.

\medskip
Smoothness of $W$, for a sufficiently general choice of coefficients, can be checked in affine coordinate patches on $\PP\W^3 V$. As a first step, we will write the equations of $W$ as 
\begin{align*}
    W=V(&p_0 x_{124} + x_{356}, p_1 x_{456} + x_{125} + x_{134}, p_2  x_{135}  - x_{234} + x_{126}, -p_4 x_{236} + x_{146} + x_{245}, \\
     &-p_3x_{145}-x_{136}+x_{235},
p_5x_{246}-x_{345}+x_{156}, p_6x_{123}+x_{256}+x_{346}) \subset \Gr(3,V_6),
\end{align*}
where $p_i \in \C^*$ are sufficiently general (in particular, not all of them equal). To check the smoothness of $W$ in the affine patch $x_{123}\neq 0$, observe that the corresponding Schubert cell in $\Gr(3,V_6)$ is isomorphic to $\C^9$.
Every point in this cell is the linear span of the rows of the matrix $M$ below.
$$M= \begin{pmatrix} 1 & 0 & 0 & u_1 & u_2 & u_3 \\
0 &1&0 & u_4 &u_5 & u_6 \\
0&0&1& u_7 &u_8&u_9 
\end{pmatrix}$$
In this Schubert cell, the coordinates  $x_{ijk}$ are the $(ijk)$ minors of $M$. The defining of equations of $W$ are then
(nonhomogeneous) equations in $u_i$, with degree up to three. One can then check directly the smoothness of this affine chart of $W$ by computing the Jacobian matrix and its size seven minors. This procedure is then repeated for each of the $20$ coordinate charts.

\medskip
It is easy to show by hand that the action of  $C_7\subset C_{14}$ on $W$ is free.
 Indeed, the fixed points of $C_7$ on $\Gr(3,V_6)$ are the $C_7$-invariant subspaces. Since the weights of $C_7$ action on $V_6$ are all distinct, these are precisely the coordinate subspaces of $V_6$. The corresponding points in $\PP(\W^3V_6)$ have all but one of the $x_{ijk}$ coordinates zero. By direct examination, we see that these points do not lie on a generic $W$.
 
\medskip
It remains to check that the action of the involution in $C_{14}$ is free. To do this, recall that $\iota$ acts as the identiy on $U_{14}$ and as $(-1)$ on $U_6$, the latter being defined in the equation \eqref{u14}. From the explicit presentation of these subspaces given above we can write this action in the coordinate charts and check that the fixed locus of $\iota$ on the ambient $\PP^{12}$ is given by the intersection of $W$ and
the disjoint union of $\PP^+ \sqcup \PP^-$, these being given by the following equations \begin{align*}
    \PP^+=& V( x_{125}+x_{134}, x_{126}-x_{234}, x_{136}-x_{235},x_{146}-x_{245}, x_{156}-x_{345}, x_{256}+x_{346})\\
    \PP^-=& V( -x_{125}+x_{134}, x_{126}+x_{234}, x_{136}+x_{235},x_{146}+x_{245}, x_{156}+x_{345}, -x_{256}+x_{346}, \\
    & x_{456}, x_{135}, x_{145}, x_{236}, x_{246}, x_{123}, x_{124}, x_{356}).
\end{align*}
Using a computer we can check that, for example if not all $p_i =1$, then the intersection of both $\PP^+$ and $\PP^-$ with $W$ is empty. 
\end{proof}
\begin{corollary}
The Euler characteristic of $S$ is $9$ and $h^{1,1}(S)=7$.
\end{corollary}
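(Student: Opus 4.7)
The plan is to deduce both statements directly from the invariants of $S$ computed in the theorem, combined with standard identities for smooth projective surfaces.

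First, I would compute the Euler characteristic via Noether's formula. Since $S$ is a smooth minimal surface of general type with $p_g(S)=q(S)=0$, we have $\chi(\mathcal{O}_S)=1-q(S)+p_g(S)=1$. The formula $e_{\mathrm{top}}(S)=12\chi(\mathcal{O}_S)-K_S^2$ (already recalled in the introduction) then gives $e_{\mathrm{top}}(S)=12-3=9$.

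For $h^{1,1}(S)$, I would read off the Betti numbers. Since $S$ is connected, $b_0(S)=b_4(S)=1$, and since $q(S)=\tfrac{1}{2}b_1(S)=0$, we have $b_1(S)=b_3(S)=0$ (using Poincar\'e duality). Therefore $b_2(S)=e_{\mathrm{top}}(S)-2=7$. The Hodge decomposition on the smooth projective surface $S$ gives $b_2(S)=h^{2,0}(S)+h^{1,1}(S)+h^{0,2}(S)=2p_g(S)+h^{1,1}(S)$, and since $p_g(S)=0$ we conclude $h^{1,1}(S)=7$.

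There is no real obstacle here: everything follows formally from the invariants $(p_g,q,K^2)=(0,0,3)$ established in the theorem, via Noether's formula and the Hodge decomposition. The only minor point worth double-checking is the use of minimality of $S$ (needed to legitimately apply Noether in the stated form), which is automatic because $K_S$ is nef — indeed $K_W$ is very ample, being the restriction of $\mathcal{O}(1)$ from the Pl\"ucker embedding, and $C_{14}$ acts freely, so $K_S$ is ample on the quotient.
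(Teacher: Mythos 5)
Your argument is correct and is exactly the standard derivation the paper leaves implicit (no proof is given for this corollary, as it follows formally from the theorem): Noether's formula gives $e_{\mathrm{top}}(S)=12\chi(\mathcal{O}_S)-K_S^2=12-3=9$, then $b_1=b_3=2q=0$ and $b_2=e_{\mathrm{top}}-2=7$, and finally $h^{1,1}=b_2-2p_g=7$. One small remark: your worry about minimality is unnecessary, since Noether's formula $\chi(\mathcal{O}_S)=\tfrac{1}{12}\bigl(K_S^2+e_{\mathrm{top}}(S)\bigr)$ is valid for every smooth projective surface (it is Hirzebruch--Riemann--Roch for $\mathcal{O}_S$), not only minimal ones; that said, your observation that $K_S$ is ample because $K_W=\mathcal{O}_W(1)$ is Pl\"ucker-very-ample and the quotient is \'etale is correct and does confirm minimality.
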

%

The family of $S$ that we have defined depends on $4$ parameters.  More precisely, we have $7$ dimensional space of choices for the $H_i\in \mH_i$.
However, there is a $(\C^*)^3$ symmetry group of $V_6$
$$
(x_1,\ldots, x_6) \mapsto (\lambda_1 x_1, \lambda_2 x_2, \lambda_3 x_3,  \lambda_3^{-1} x_4,   \lambda_2^{-1} x_5 ,  \lambda_1^{-1} x_6)
$$
 that preserves the action of $C_7$ and the form $w$. 
 This scaling does not change the surface, and therefore we have as a naive moduli count 7-3=4 parameters. Notice that this coincides with the expected number of moduli $M$ of $S$  
$$h^1(S, T_{S}) \geq  \dim M \geq h^1(S, T_{S})-h^2(S, T_{S})=10\chi(\of_{S})-2K_{S}^2=4.$$
There is another less heuristic way of checking that our construction actually gives a four dimensional family. Let $A_W$ denote the affine cone over $W$. The ($\Z$-graded) space $T^1_{A_W} \cong \Ext^1(\Omega^1_{A_W}, \of_{A_W})$ parametrises the deformation of the affine cone, and its degree 0 component represents in particular the deformation of the couple $(W, \of_W(1))$. One can compute, for example using the package "VersalDeformations" of Macaulay2, that this space is 56 dimensional, with its $C_{14}$ invariant subspace being exactly 4 dimensional. However there is no way of making this lengthy computation computer--free.

\medskip
To check unobstructedness in the above formula, since $\pi: W \longrightarrow S$ is a finite map, it suffices to check that $H^2(T_W)=0$.  There are several ways of proving this, for example using the Borel--Bott--Weil theorem as follows. Although this is a priori valid only for a general section, it is valid in our case since $\chi(T_W)$ is constant in fibers and $W$ is of general type. Alternatively one can compute the relevant graded component of $T^2_{A_W}$ as above. 
\begin{lemma} Let $W \subset G=\Gr(3,V_6)$ be a smooth complete intersection of seven sections of $\of_{G}(1)^{\oplus 7}$. Then the deformations for  $W$ are unobstructed, i.e. $H^2(T_W)\cong0$.
\end{lemma}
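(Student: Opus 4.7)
The plan is to pass to the Serre-dual statement $H^0(W, \Omega^1_W(1)) = 0$ and reduce it to a cohomology computation on $G = \Gr(3, V_6)$ via the conormal sequence together with a Koszul resolution. By adjunction, $K_W = (K_G + N_{W/G})|_W = \mathcal{O}_W(-6+7) = \mathcal{O}_W(1)$, so Serre duality on the surface $W$ gives $H^2(T_W) \cong H^0(\Omega^1_W(1))^\vee$, and it suffices to show the right-hand side vanishes.

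From the conormal sequence $0 \to N_{W/G}^\vee \to \Omega^1_G|_W \to \Omega^1_W \to 0$ with $N_{W/G}^\vee = \mathcal{O}_W(-1)^{\oplus 7}$, twisting by $\mathcal{O}_W(1)$ and using $H^1(\mathcal{O}_W) = q(W) = 0$, one identifies $H^0(\Omega^1_W(1))$ with the cokernel of the natural map
\[
\alpha : \mathbb{C}^7 \cong H^0(\mathcal{O}_W^{\oplus 7}) \longrightarrow H^0(\Omega^1_G(1)|_W)
\]
which sends the $i$-th generator to the restriction of the differential of the $i$-th defining equation $f_i$. It therefore suffices to prove that $\alpha$ is an isomorphism.

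To compute the target I would resolve $\mathcal{O}_W$ by the Koszul complex on $G$, tensor with $\Omega^1_G(1)$, and run the hypercohomology spectral sequence with $E_1^{p,q} = H^q(G, \Omega^1_G(1+p))^{\oplus \binom{7}{-p}}$. Kodaira--Nakano vanishing $H^q(\Omega^1_G \otimes L^{-1}) = 0$ for $L$ ample and $q < 8$ annihilates every anti-diagonal term with $p \leq -2$. The edge $(0,0)$ is killed by the Borel--Bott--Weil vanishing $H^0(G, \Omega^1_G(1)) = H^1(G, \Omega^1_G(1)) = 0$, which follows from twisting the exact sequence $0 \to \Omega^1_G \to V_6^\vee \otimes \mathcal{S} \to \End(\mathcal{S}) \to 0$ by $\mathcal{O}(1)$ and using the isomorphism $\mathcal{S}(1) \cong \wedge^2 \mathcal{S}^\vee$: the induced map $V_6^\vee \otimes \wedge^2 V_6^\vee \to H^0(\End(\mathcal{S})(1))$ is an isomorphism of $90$-dimensional $\GL(V_6)$-representations, since both sides decompose as $\wedge^3 V_6^\vee \oplus \Sigma^{(2,1)} V_6^\vee$ and the map is nonzero on each irreducible component. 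The only surviving entry on $p+q=0$ is $E_1^{-1,1} = H^1(G, \Omega^1_G)^{\oplus 7} \cong \mathbb{C}^7$ generated by seven copies of the hyperplane class, and its incoming and outgoing differentials vanish automatically for the same Kodaira--Nakano reason. Hence $H^0(\Omega^1_G(1)|_W) \cong \mathbb{C}^7$.

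Both domain and target of $\alpha$ are then $7$-dimensional, so surjectivity suffices. Since $W$ is smooth, the differentials $df_1|_W, \ldots, df_7|_W$ are pointwise linearly independent and hence span, forcing $\alpha$ to be an isomorphism. Therefore $H^0(\Omega^1_W(1)) = 0$ and $H^2(T_W) = 0$. The main obstacle is the Borel--Bott--Weil vanishing $H^*(G, \Omega^1_G(1)) = 0$ together with the matching between the conormal map and the spectral-sequence identification of $H^0(\Omega^1_G(1)|_W)$; the remaining steps are routine cohomological bookkeeping on the Grassmannian.
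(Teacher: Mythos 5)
Your argument is the Serre dual of the paper's. You replace the normal-bundle sequence for $T_W$ by the conormal sequence for $\Omega^1_W$, and you compute $H^0(\Omega^1_G(1)|_W)$ via the Koszul resolution and Kodaira--Akizuki--Nakano, whereas the paper computes $H^2(T_G|_W)$ via the Koszul resolution and Borel--Bott--Weil; term by term these dualize under $H^q(T_G(-i)) \cong H^{9-q}(\Omega^1_G(i-6))^\vee$ on the nine-fold $G$. Both routes reduce to the same input: $\Omega^1_G$ has cohomology only in degree $1$, the twists $\Omega^1_G(-1), \ldots, \Omega^1_G(-5)$ are acyclic (dual to the paper's claim that $T_G(-1), \ldots, T_G(-5)$ are), and $\Omega^1_G(1)$ is acyclic (dual to $T_G(-7)$). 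So the plan is correct, and it is a legitimate alternative presentation. Two of your steps deserve tightening, though. First, the deduction of $H^0(\Omega^1_G(1)) = H^1(\Omega^1_G(1)) = 0$ from the Euler-type sequence is not complete as stated: you assert that the induced $H^0$ map is nonzero on each of the two irreducible $\GL(V_6)$-summands, but give no reason. It is cleaner, and closer to the paper, to observe that $\Omega^1_G(1) \cong \mathcal{R} \otimes \wedge^2\mathcal{Q}$ is the irreducible homogeneous bundle with weight $\gamma = (1,1,0\,|\,1,0,0)$, and $\gamma + (5,4,3,2,1,0) = (6,5,3,3,1,0)$ has a repeated entry, so Borel--Bott--Weil gives acyclicity of the whole bundle outright. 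Second, in your last paragraph: pointwise independence of the $df_i|_W$ gives linear independence of the seven global sections, hence \emph{injectivity} of $\alpha$, not that they span; injectivity together with equal dimensions is what forces $\alpha$ to be an isomorphism. In fact injectivity of $\alpha$ is automatic, since $\alpha$ is the leading arrow of the long exact cohomology sequence attached to the twisted conormal sequence, so this step requires no separate geometric argument at all.
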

\begin{proof}Let $\F=\of_{G}(1)^{\oplus 7}$. Consider first the tangent sequence for $W$, that is $$0 \to T_W \to T_G|_W \to \F|_W \to 0.$$ Since $\of_X(1)$ is ample $H^1( \F|_W) \cong 0$. Therefore we have $$ 0 \to H^2(T_W) \to H^2(T_G|_W) \to H^2( \F|_W) \to 0.$$
A first observation is that the canonical bundle of $W$ is $\omega_W \cong \of_W(1)$. Therefore $H^2(\of_W(1)) \cong H^2(\omega_W) \cong \C$. This implies $H^2( \F|_W) \cong \C^7$. In order to compute $H^2(T_G|_W) $ we need to use the Koszul complex for $W$, twisted by $T_G$, namely
$$ 0 \to T_G(-7) \to T_G(-6)^7 \to \ldots \to T_G(-2)^{21} \to T_G(-1)^7 \to T_G \to T_G|_W \to 0$$
It is easy to check using Borel--Bott--Weil 
  that $H^i(T_G)=0$ for $i>0$ and that $T_G(-i)$ are acylic for $i=1, \ldots, 5, 7$.  Denote indeed by $\mR$ the tautological bundle of $\Gr(3,V_6)$ and by $\mQ$ its (ample) quotient. Any irreducible homogeneous bundle can therefore be represented in terms of Schur functor as $\Sigma_{\alpha}\mQ \otimes \Sigma_{\beta} \mR$. We denote by $\gamma= (\alpha | \beta)$. Denote by $\delta=(5,4,3,2,1,0)$. By Borel--Bott--Weil theorem any irreducible homogeneous vector bundle will be acylic if $\gamma+\delta$ has repeated entries.
  The tangent bundle $T_G$ to $\Gr(3,V_6)$ is isomorphic to $\Hom(\mR, \mQ)\cong \mR^{\vee} \otimes \mQ$. Therefore the partition associated to it is $(1,0,0,0,0,-1)$, or equivalently $(2,1,1,1,1,0)$ if we use natural dualities. Twisting by $\of_G(-i) \cong (\W^3 \mR)^{\otimes i}$ is equivalent to consider the partition $\gamma(i)=(2,1,1,1+i, 1+i, i)$. It is immediate to check that for $i=1,\ldots, 5, 7$ $\gamma(i)+\delta$ has repeated entries. For $i=0$ there are no repeated entries. The (unique) degree where the bundle has cohomology is therefore identified by the \emph{number of disorder} of the partitions, namely the number of negative differences in the sequence (which is zero in this case). 
  
  The last case is for $i=6$, where we have $H^8 (T_G \otimes \W^6(\of_G(-1)^7)) \cong \C^7$. Indeed, since $\W^9 T_G \cong \of_G(6)$, we get $H^8(G, T_G \otimes \of_G(-6)) \cong H^8 (G, \Omega_G^8) \cong \C$. This implies $ H^2(T_G|_W) \cong \C^7$, and in turn $H^2(T_W) \cong 0$.
\end{proof}

\begin{rmk} We point out that there is another surface with the same invariants as $W$: this is given by a codimension eight linear cut $W'$ of the Grassmannian $\Gr(2,7)$. This surface was already considered in \cite{ef}. The relation between $W$ and $W'$ is still unknown, and we plan to explore it further.
\end{rmk}
We point out that finding an involution that leaves no point fixed on $W'$ could be extremely tricky, if not impossible. Via character theory we can indeed prove that such involution cannot extend to the whole of $\Gr(2,7)$. Indeed it is classically known (see \cite{aut}) that Aut$(\Gr(2,7))= PGL(V_7)$, whereas on $\Gr(3,6)$, PGL$(V_6)$ is a subgroup of index two of the automorphism group. In particular in our construction above our involution was indeed not induced from $V_6$.

\section{A surface with $K^2=3$ and three $A_2$ singularities} \label{sec.3}
In this section we will describe the computer calculations that lead to constructing a quotient of a fake projective plane by a cyclic group $C_3$.  The  Mathematica calculations are available in the  file \cite[Section3.nb]{support}.

\medskip
In the previous section we  constructed a dimension four family of complete intersections in $\Gr(3,V_6)$ which admit a free action of the cyclic group $C_{14}$.  The quotient surfaces have numerical invariants $p_g=q=0,~K^2=3$.
It was remarked in \cite{bauercatanese} that some such surfaces can be constructed as a deformation of a $C_3$ quotient of a certain fake projective plane. So it was only natural to postulate that our family is indeed this family and to look for degenerations of the surfaces that would be $C_3$ quotients of fake projective planes with fundamental group $C_{14}$. 

\medskip
By the classification of Cartwright and Steger, there is one such fake projective plane, up to conjugation. It has a larger symmetry group $C_3\times C_3$, but only one copy of $C_3$ acts trivially on the torsion of its Picard group. This motivated us to look for our complete intersections which have an additional 
condition of having an order $3$ automorphism. We also expected that the quotient has three $A_2$ singularities.
We made an ultimately successful guess to study the two-parameter family of complete intersections in $\Gr(3,V_6)$ given by the equations
$$
0=p_0 x_{124} + x_{356},  0 = x_{456 }+ x_{125}  - x_{143}, 
   0=  x_{135} + x_{243} - x_{216},  0 =  x_{263} + x_{416} - x_{425},
 $$
 $$
  0 =  p_3 x_{415} + x_{316} - x_{325}, p_3 x_{246} + x_{543} - x_{516}, 
  0=  p_3 x_{123} + x_{625} - x_{643}.
$$
Here $p_0$ and $p_3$ are the parameters of the family. We use the convention $x_{ijk} = -x_{jik} = -x_{ikj}$ to make explicit the $C_3$ action that multiplies indices by $2 \hskip -5pt\mod 7$.

\begin{rmk}\label{symrem}
One can observe that the transformation of parameters $(p_0, p_3) \mapsto (p_0^{-1} p_3^3, p_3)$ leads to an isomorphic complete intersection. This symmetry is induced by the coordinate change $x_k \mapsto x_{7-k}$.
\end{rmk}

\medskip
In order to find a surface with $A_2$ singularities, we first tried to determine the locus of singular complete intersections. This was a non-trivial endeavor, since the equations in question were too complicated to be solved directly, at least with our hardware and software. To overcome this difficulty, we first  computed the special case $p_3=1$ to observe that the above complete intersections are singular for  eight values of $p_0$ which are roots of the equation
$$0=999 - 4950 p_0 + 13739 p_0^2 - 23670 p_0^3 + 28532 p_0^4 - 
    23670 p_0^5 + 13739 p_0^6 - 4950 p_0^7 + 999 p_0^8.$$
We then replaced $p_3=1$ by $p_3 = 1 + \epsilon$ for  $\epsilon= 10^{-20}$. We used the gradient descent method built into Mathematica's FindRoot command to calculate the corresponding $8$ roots that deform the above ones. After postulating that there is a polynomial in $(p_0,p_3)$ with reasonably small coefficients that describes the locus of singular complete intersections in $\Gr(3,V_6)$  we found this polynomial to be 
$$
{
\small
\begin{array}{l}
0=
1280 p_0^8 - 6144 p_0^7 p_3 + 4608 p_0^8 p_3 + 10240 p_0^6 p_3^2 - 
 18176 p_0^7 p_3^2 + 7424 p_0^8 p_3^2 - 8192 p_0^5 p_3^3 + 
 41984 p_0^6 p_3^3 
\\[.2em]
 - 30976 p_0^7 p_3^3 + 7040 p_0^8 p_3^3 + 
 4096 p_0^4 p_3^4 - 44032 p_0^5 p_3^4 + 75328 p_0^6 p_3^4 - 
 34560 p_0^7 p_3^4 + 4320 p_0^8 p_3^4 + 28672 p_0^4 p_3^5 - 
 \\[.2em]
 108288 p_0^5 p_3^5 + 92448 p_0^6 p_3^5 - 25568 p_0^7 p_3^5 + 
 1760 p_0^8 p_3^5 - 8192 p_0^3 p_3^6 + 89600 p_0^4 p_3^6 - 
 155264 p_0^5 p_3^6 + 77760 p_0^6 p_3^6 
 \\[.2em]
 - 12784 p_0^7 p_3^6 + 
 464 p_0^8 p_3^6 - 44032 p_0^3 p_3^7 + 169216 p_0^4 p_3^7 - 
 147984 p_0^5 p_3^7 + 46624 p_0^6 p_3^7 - 4320 p_0^7 p_3^7 + 72 p_0^8 p_3^7 
 \\[.2em]
 + 
 10240 p_0^2 p_3^8 - 108288 p_0^3 p_3^8 + 190656 p_0^4 p_3^8 - 
 100696 p_0^5 p_3^8 + 19440 p_0^6 p_3^8 - 968 p_0^7 p_3^8 + 5 p_0^8 p_3^8 + 
 41984 p_0^2 p_3^9 
 \\[.2em]
 - 155264 p_0^3 p_3^9 + 149952 p_0^4 p_3^9 - 
 50348 p_0^5 p_3^9 + 5778 p_0^6 p_3^9 - 142 p_0^7 p_3^9 - 6144 p_0 p_3^{10} + 
 75328 p_0^2 p_3^{10} - 147984 p_0^3 p_3^{10} 
 \\[.2em]
 + 85745 p_0^4 p_3^{10} - 
 18498 p_0^5 p_3^{10} + 1177 p_0^6 p_3^{10} - 12 p_0^7 p_3^{10} - 
 18176 p_0 p_3^{11} + 92448 p_0^2 p_3^{11} - 100696 p_0^3 p_3^{11} 
 \\[.2em]
 + 
 37488 p_0^4 p_3^{11} - 4852 p_0^5 p_3^{11} + 164 p_0^6 p_3^{11} + 1280 p_3^{12} - 
 30976 p_0 p_3^{12} + 77760 p_0^2 p_3^{12} - 50348 p_0^3 p_3^{12} 
 \\[.2em]
 + 
 11916 p_0^4 p_3^{12} - 846 p_0^5 p_3^{12} + 10 p_0^6 p_3^{12} + 4608 p_3^{13} - 
 34560 p_0 p_3^{13} + 46624 p_0^2 p_3^{13} - 18498 p_0^3 p_3^{13} + 
 2644 p_0^4 p_3^{13} 
 \\[.2em]
 - 86 p_0^5 p_3^{14} + 7424 p_3^{14} - 25568 p_0 p_3^{14} + 
 19440 p_0^2 p_3^{14} - 4852 p_0^3 p_3^{14} + 350 p_0^4 p_3^{14} - 4 p_0^5 p_3^{15} 
 + 
 7040 p_3^{15} 
 \\[.2em]
 -
 12784 p_0 p_3^{15} + 5778 p_0^2 p_3^{15} - 846 p_0^3 p_3^{15} + 
 28 p_0^4 p_3^{16} + 4320 p_3^{16} - 4320 p_0 p_3^{16} + 1177 p_0^2 p_3^{16} - 
 86 p_0^3 p_3^{16} + p_0^4 p_3^{17} 
 \\[.2em]
 + 1760 p_3^{17} - 968 p_0 p_3^{17} + 
 164 p_0^2 p_3^{17} - 4 p_0^3 p_3^{18} + 464 p_3^{18} - 142 p_0 p_3^{18} + 
 10 p_0^2 p_3^{19} + 72 p_3^{19} - 12 p_0 p_3^{19} + 5 p_3^{20}.
\end{array}
}
$$
After finding the above locus of singular complete intersections, we made an educated guess to look for singularities of the above curve in hopes of finding complete intersections with singular points of type $A_2$. This was a straightforward computer calculation that lead to twelve singular points, which we then looked at in detail. Up to complex conjugation and symmetry of Remark \ref{symrem}, there was one solution defined over the expected field, with the expected  $42$ singularities of type $A_2$, which form one orbit of the semidirect product of $C_{14}$ and  $C_3$.
Specifically, we got
$$
p_0 =   \frac 1 {256}(-2475 + 49  i \sqrt 7 - 35 i \sqrt{15} + 231 \sqrt{105}), ~
  p_3 = \frac 1{16} (-17 + 7 i \sqrt{15}).
$$
In what follows we will denote this surface by $W_{\Gr}$.
It is worth mentioning that the ratios of the Pl\"ucker coordinates  $x_{ijk}$ of the $42$ singular points of $W_{\Gr}$ are not at all pleasant. Some of them are roots of equations of degree $168$ with coefficients that are tens of digits long. Nonetheless, we were able to verify by computer that these are indeed  $A_2$ singularities.

\medskip
We will now focus our attention on the quotient surface $W_{\Gr}/C_{14} $ which is a singular surface with $K^2=3$ and three $A_2$ singularities. Such surfaces have been shown in \cite{Keum.comment} to be  quotients of  fake projective planes by a cyclic group $C_3$. For our purposes, it will be convenient to work with the unramified double cover $X=W_{\Gr}/C_{7} $  of $W_{\Gr}/C_{14} $ which is a surface with $K^2=6$ and six singular points of type $A_2$. We found equations of this surface by the following method, using Mathematica, analogous to \cite{BorKeum}. It should also be possible to use symbolic rather than numerical methods for it, or at the very least check symbolically that the invariants of $C_7$ action satisfy these equations. Notice that a smoothing of this surface gives us equations of the surface constructed in \cite{ef}.

\medskip
\begin{enumerate}
\item Construct multiple points on the surface $W_{\Gr}$, numerically with several hundred digit accuracy. The points are generated randomly.

\item Construct sections of $2K_X$ as quadratic polynomials in Pl\"ucker coordinates on $W_{\Gr}$ which are invariant under $C_7$. There is an $8$-dimensional space of these sections. We initially picked the basis
$$
\begin{array}{l}
(U_0, \ldots ,U_7) = \Big( \frac 14 (-8925 - 1561 i \sqrt 7+ 1115 i \sqrt{15}
        + 833 \sqrt{105} )\, x_{124}^2,  32 (7 - i \sqrt{15})\, x_{456}  x_{123}, 
 \\[.2em]
 32 (7 - i \sqrt{15}) \,x_{135} x_{246}, 
  32 (7 - i \sqrt{15}) \, x_{263} x_{415},
  128\, x_{456} (x_{625} + x_{643}),   128\, x_{135} (x_{543} + x_{516}), 
  \\[.2em]
  128\, x_{263} (x_{316} + x_{325}),
 \frac 13((136 - 56 i \sqrt{15}) (x_{123} (x_{125} + x_{143}) + 
          x_{246} (x_{243} + x_{216}) + x_{415} (x_{416} + x_{425})) 
  \\[.2em]
          + 
  128( x_{456} (x_{625} + x_{643}) + x_{135} (x_{543} + x_{516}) + 
          x_{263} (x_{316} + x_{325})))
          \Big)
\end{array}
$$
in order to simplify the resulting degree two equations.

\item Use the aforementioned points of $W_{\Gr}$ to find a basis of equations of degree $2$ and $3$ among the sections of $K_X$, numerically.

\item Identify the coefficients of these equations with algebraic numbers. These numbers are guaranteed to be in the field $\QQ(\sqrt{-15},\sqrt{-7})$, which provides a good check of the calculations. The equations of $X$  in terms of $U_j$  are presented in the accompanying file \cite[EquationsOfXintermsofU]{support}.

\item Calculate the Hilbert polynomial of the scheme cut out by these equations to ensure that they cut out the surface $X$ scheme-theoretically.

\end{enumerate}

\begin{rmk}
It is currently a bit of an art to find the generators of the ideal of the relations on $U_i$ that are relatively simple. Our approach involved looking for equations with relatively few terms and picking ones of low length, until they generate the space of equations.
\end{rmk}

\begin{rmk}\label{C3rot}
The residual $C_2$ action has variables $U_0,\ldots, U_3$ as even and $U_4,\ldots, U_7$ as odd. There is also a $C_3$ action
$$
(U_0:\ldots:U_7) \mapsto (U_0:U_2:U_3:U_1:U_5:U_6:U_4:U_7)
$$
The coordinates $U_0,\ldots, U_3$ can be viewed as sections of the bicanonical class on $W_{\Gr}/C_{14}$. There is a dimension three space of even degree two relations, which are not predicted by the Hilbert function considerations.  There are additional spaces of degree three relations of dimension four (even) and two (odd).  This gives us a total of nine equations in $8$ homogeneous variables. 
The size of the equation file is about $30$Kb, so, while it is small by computer standards, it is not worth presenting in printed form.
\end{rmk}

Importantly, equations of $X$ in terms of $U_j$  have coefficients in the field of $\QQ(\sqrt{-15},\sqrt{-7})$. However, we observed that it was possible to make a linear change of variables to new coordinates $(W_0,\ldots,W_7)$ so that the equations are defined over  $\QQ(\sqrt{-15})$, with the file size of about $20$Kb \cite[EquationsOfXintermsofW]{support}. \footnote{We list these equations in the Appendix, but they are not exactly human-usable.}
The linear change was designed to send the six singular points (whose coordinates are complicated in the $U$-basis) to 
$$(0:1:0:0:\pm 1:0:0:0),~(0:0:1:0:0:\pm 1:0:0),~(0:0:0:1:0:0:\pm 1:0)$$
and to make the tangent spaces at the points defined over $\QQ(\sqrt{-15})$. 
The $C_3$ action on $W$ is again given by 
$$
\sigma:(W_0\,\ldots, W_7) \to (W_0,W_2,W_3,W_1,W_5,W_6,W_4,W_7).
$$
Unfortunately, the equations are still too lengthy to be presented in a printed version of the paper.

\begin{rmk}
We have verified that these equations cut out the scheme with the expected Hilbert polynomial $h(n) = 12n^2 - 6n + 2$ of a surface with $K^2=6$ in its bicanonical  embedding for $n\geq 5$, see \cite[EquationsOfXmagma]{support}.  
\end{rmk}

\begin{rmk}
The embedding by $W_j$ is not projectively normal. In particular, not all degree two sections of $\mathcal O(2)$ can be written as quadratic polynomials in $W_i$. This makes working with sections of $\mathcal O(2)$ more complicated, as one needs to represent some 
of them as rational functions  in $W_i$ with degree three numerators and degree one denominators.
\end{rmk}

\section{Finding the triple cover: the key step}\label{sec.4}
This section is devoted to the key step of the construction of the fake projective plane $\PP^2_{fake}$ which is a Galois triple cover of $W_{\Gr}/C_{14} = X/C_2$. 
Finding a smooth Galois triple cover of a surface with singular points of type $A_2$ involves finding Weil divisors which are not Cartier which could in general be difficult. This was an extremely delicate calculation that took about six months and multiple dead ends before introduction of several important ideas. The process is described below and is implemented in the accompanying Mathematica file \cite[Section4.nb]{support}.

\medskip
{\bf The approach.} 
Suppose that we have a fake projective plane $\PP^2_{fake}$  with a $C_3\times C_3$ automorphism group such that  the quotient by the first $C_3$ 
is isomorphic to the quotient of $X$ 
$$
\PP^2_{fake} / C_3 \cong X/C_2
$$
with the above $C_2$ action. The second $C_3$ induces the action on $X/C_2$ coming from the above permutation $\sigma$ of $W_i$ coordinates.
Let $4H$ be a divisor on $\PP^2_{fake}$ where $H$ is satisfies $3H=K_{\PP^2_{fake}}$. We may moreover assume that the automorphism group $C_3\times C_3$ of $\PP^2_{fake}$ fixes $4H$ and therefore produces a projective action on $H^0(\PP^2_{fake},4H)\cong \C^3$.
It can be shown that the action of the second $C_3$ permutes the eigenvectors $u_0,u_1,u_2$ of the first $C_3$, with weights
 $1, \exp({\frac {2 \pi {\rm i}}3}), \exp({-\frac {2 \pi {\rm i}}3})$.

 \medskip
The cubes $u_0^3, u_1^3, u_2^3$ will be sections of $12H = 4K$ which will descend to 
 $C_2$ invariant (=even) sections of $\mathcal O(2)$ on $X$.
 In view of the $C_3$ action, these will be $f,\sigma(f),\sigma^2(f)$ for some section $f$. Importantly, the product $u_0 u_1 u_2$ will be an even $C_3$ invariant section $d$ of $\mathcal O(2)$ and there must hold
 \begin{equation}\label{eq.key}
f  \sigma(f)  \sigma^2(f) = d^3.
\end{equation}
Knowledge of $f$ and $d$ allowed us to find the triple cover fairly easily as is explained in the next section.

\medskip
{\bf How we found $f$ and $d$.} In addition to the equation \eqref{eq.key}, we know that $\{f=0\}$ must pass through $(0 : 1 : 0 : 0 : 1 : 0 : 0 : 0) $ and  $(0 : 0 : 1 : 0 : 0 : 1 : 0 : 0)$. Its proper preimage on the blowup of the singularity will have multiplicity $2$ and $1$, and $1$ and $2$ on the corresponding pairs of exceptional curves. In the notations of the diagram below, the preimage of the divisor $\{f=0\}$ on $X$ on the minimal resolution is given by $3C + 2A_1 + B_1 + 2B_2 +A_2$, where 
$A_i, B_i$ are the exceptional curves. The preimage of the divisor $\{d=0\}$ is $C+\sigma(C)+\sigma^2(C) + \sum_{i=1}^3(A_i+B_i)$.

\medskip
\begin{tikzpicture}

    \draw[line width=1pt] (-2,3)--(-3,1) node[left]{$A_1$};
    \draw[line width=1pt] (-2.5,2.5)--(-.5,3.5) node[above]{$B_1$};
 
 \draw[line width = 1pt] (3.5,1.2)--(-1.5,3.5)node[above]{$\sigma^2(C)$};
    
    \draw[rotate=-120][line width=1pt] (-2,3)--(-3,1) node[right]{$A_3$};
    \draw[rotate=-120][line width=1pt] (-2.5,2.5)--(-.5,3.5) node[right]{$B_3$};
    
 \draw[rotate=-120][line width = 1pt] (3.5,1.2)--(-1.5,3.5)node[right]{$\sigma(C)$};
     
    \draw[rotate=120][line width=1pt] (-2,3)--(-3,1) node[right]{$A_2$};
    \draw[rotate=120][line width=1pt] (-2.5,2.5)--(-.5,3.5) node[left]{$B_2$};
 
 \draw[rotate=120][line width = 1pt] (3.5,1.2)--(-1.5,3.5)node[below]{$C$};
   
\end{tikzpicture}

\begin{rmk}
The automorphism group of the minimal resolution of $X$ does not switch the exceptional lines over a singular point, so there is a meaningful choice here which of the lines is $A_1$ and which is $B_1$, with the other choices then fixed by $C_3$ symmetry. 
\end{rmk}

\begin{rmk}\label{notCartier}
The divisor of $f=0$ on $X/C_2$ is $3C$ where $C$ is a Weil divisor which is not Cartier. Indeed, its preimage on the blowup of $X/C_2$ is numerically equivalent to $C + \frac 23 A_1 + \frac 13B_1 + \frac 23B_2 + \frac 13A_2$ 
in the notations of the above diagram.
\end{rmk}

To solve \eqref{eq.key},
we start by computing  the dimension $19$ space of sections $H^0(\PP^2_{fake} / C_3, 4K_{\PP^2_{fake} / C_3})$.
These are realized as elements of  $H^0(X,\mathcal O_X(2))$ which are even with respect to the covering involution.
A dimension $17$ subspace of $H^0(X,\mathcal O_X(2))$ is given by the even quadratic polynomials in $(W_0,\ldots, W_7)$ subject to three quadratic relations of \cite[EquationsOfXintermsofW]{support}. We augment it to the whole space by calculating two additional basis elements of the form $P(W)/(W_4+W_5+W_6)$ where $P$ is a degree three polynomial in $W_i$ which vanishes on $\{0=W_4+W_5+W_6\}\cap X$ and is odd with respect to the involution.

\medskip
The requirement of $\{f=0\}$ passing  through the singular points as above reduces the dimension of the space of sections $f$ from $19$ to $13$. Similarly, $\{d=0\}$ must pass through all three singular points, which reduces the dimension  from $\dim H^0(X/C_2,\mathcal O(2))^{C_3}= 7$ to $6$. So the pair $(f,d)$ can be described by $13+6=19$ parameters, up to scaling.

\medskip
The equation \eqref{eq.key} is cubic in the coordinates of $f$ and $d$. It takes place in the space $H^0(X/C_2,\mathcal O(6))^{C_3}$ of dimension $67$, so we have $67$ cubic equations  in $19$ variables. with coefficients  in $\QQ[\sqrt{-15}]$. The above linear conditions on $f$ and $d$ reduced these $67$ equations to $58$. We then used Smith decomposition and a version of LLL algorithm (both built into Mathematica) to find a basis of the space of equations with smaller coefficients. Specifically, given a list of polynomials with 
coefficients in $\Z[\sqrt{-15}]$ we find find the corresponding matrix of real and imaginary parts of these equations and the ones multiplied by $\sqrt{-15}$, then find a small basis of the saturation of its row span and transform it back to the equations. The resulting  file of $58$ cubic equations \cite[58CubicRels]{support} was approximately $3.3$Mb long. It was too big to be solved by Mathematica or Magma, however there were further simplifications that allowed us to do it.

\medskip
If $p$ is a fixed point of the $C_3$ action on $X/C_2$, then we see that $f^3(p)=d^3(p)$. This cubic equation can be reduced to a linear equation (in one of three ways).  There are three fixed points, and we picked the conditions which had coefficients in $\QQ[\sqrt{-15}]$. Only one of several possible choices led to an eventual solution. This reduced the number of unknowns to $19-3=16$ and the number of cubic equations to $55$. 

\medskip
We computed the neighborhoods of the blowups of exceptional lines. At a preimage of a singular point with two exceptional lines $A_1$ and $B_1$ the section 
$f=u_0^3$ must locally look like $B_1+ 2 A_1 + 3C$ where $C$ is some curve that intersects $A_1$ at a point. 
The section $d$ locally looks like $A_1 + B_1 +C$. Starting from the equations of $f$ and $d$ we can compute the restrictions of $3C$ and $C$ to the exceptional curve $A_1$, respectively.  The  polynomial coming from $f$ is up to a constant the cube of the polynomial coming from $d$, which leads to $6$ quadratic equations for each of the two points  $(0 : 1 : 0 : 0 : 1 : 0 : 0 : 0) $ and  $(0 : 0 : 1 : 0 : 0 : 1 : 0 : 0)$.  Specifically, if the restrictions of $f$ and $d$ to the exceptional curve are $a_0 + a_1 t + a_2 t^2 + a_3 t^3$ and $b_0 + b_1 t$ respectively, then the equations  are 
$$0=3 a_3 b_0 - a_2 b_1= a_2 b_0 - a_1 b_1=a_1 b_0 - 3 a_0 b_1=
   9 a_0 a_3 - a_1 a_2= 3 a_0 a_2 - a_1^2= 3 a_1 a_3 - a_2^2.$$
We have thus constructed a system of $12$ quadratic and $55$ cubic equations in $16$ variables, with coefficients in $\QQ[\sqrt{-15}]$.
\emph{The size of the file \cite[Rels23withrr] {support} containing these equations was approximately $5.1$Mb, and standard Mathematica software and basic hardware were not fast enough to solve them.}

\medskip
We then employed a natural, yet amusing, trick. Magma has readily computed the Hilbert polynomial of the reduction of the above system modulo $19$ with $\sqrt {-15}$ set as $2 \hskip -5pt \mod 19$. It was precisely $1$, which meant that the system had a unique solution. By adding linear relations and rechecking the Hilbert polynomial, it was easy to find that solution modulo $19$. This then allowed us to inductively find a solution modulo $19^k$, since increasing $k$ by $1$ leads to simple linear equations modulo $19$ (we used an appropriate square root of $(-15)$). After computing it up to $k=200$, we had enough information to find $f$ and $d$, on the assumption that the coefficients had reasonable numerators and denominators. We verified that they satisfy the $12$ quadratic and $55$ cubic equations precisely by a symbolic calculation.
Specifically, we obtained that $d$, up to scaling, is
$$
{
\tiny
\begin{array}{l}
42318123032 W_0^2 + 
 2256004 (-23709 - 33355 \ii\sqrt{15}) W_0 (W_1 + W_2 + W_3) 
    + 
 4512008 (-369999 - 115101 \ii\sqrt{15}) (W_1 W_2 
 + W_1 W_3 + W_2 W_3) 
    \\[.2em]
+    4512008 (-134064 - 81144 \ii\sqrt{15}) (W_1^2 + W_2^2 + W_3^2 - W_4^2 - 
    W_5^2 - W_6^2) + 
 6008 (-60345558 - 90294750 \ii\sqrt{15}) (W_4 + W_5 + 
    W_6) W_7 
     \\[.2em]
     + (457763819877 - 572077298835 \ii\sqrt{15}) W_7^2
 \end{array}
   }
$$
while $f$ is given by a notably more complicated formula.

\section{Finding the triple cover and verification of FPP claim}\label{sec.5}
In this section we discuss the construction of the triple cover and the verification that it is indeed a fake projective plane. We recall that we have constructed a surface $X$ in $\PP^7$ cut out by three quadratic and six cubic equations in coordinates $(W_0:\ldots:W_7)$ in the Appendix. It has a free action of $C_2$
$(W_0:\ldots:W_7)\to (W_0:\ldots:W_3:-W_4,\ldots:-W_7)$. It is also acted on by a cyclic group of order three  given by
$$
\sigma(W_0:\ldots:W_7) = (W_0:W_2:W_3:W_1:W_5:W_6:W_4:W_7).
$$
The quotient $X/C_2$  has three singular points of type $A_2$, permuted by $C_3$. We have also found an even section $f$ of $\mathcal O_X(2)$ and an even $C_3$-invariant section of $\mathcal O_X(2)$ such that $f \sigma(f)\sigma^2(f) = d^3$ and
the divisor of $f$ is $3C$ where $C$ is Weil but not Cartier, see Remark \ref{notCartier}.

\medskip
Once we have found $f$ and $d$, constructing equations of the triple cover $\PP^2_{fake}$ is fairly straightforward, similar to \cite{BorKeum}. We consider the normalization $Z$ of $X/C_2$ in the field obtained by attaching an algebraic function $z$ which satisfies
\begin{equation}\label{z}
z^3 = \sigma (f) f^{-1}.
\end{equation}
This field has a $C_3 \times C_3$ automorphism group. Namely, there is an action of the \emph{covering} $C_3$ given by 
$$
z\mapsto {\rm e}^{\frac 23 \pi \ii} z, ~ g\mapsto g~{\rm for~}g\in {\rm Rat}(X/C_2).
$$
There is also a commuting action of the \emph{lift} $C_3$ given 
by 
$$
z\mapsto  d\, \sigma(f)^{-1} z, ~ g\mapsto \sigma(g)~{\rm for~}g\in {\rm Rat}(X/C_2).
$$
Indeed, we get 
$$z^3\mapsto d^3 \,\sigma(f)^{-3} z^3 =( f \sigma(f) \sigma^2(f))  \sigma(f)^{-3} \sigma(f) f^{-1} = \sigma^2(f)\sigma(f)^{-1} = \sigma(\sigma(f) f^{-1})
$$
so \eqref{z} is preserved by the above action.

\begin{rmk}\label{zdiv}
The usual convention is that the action of $C_3$ on points of $X$ is induced by
$$\sigma(g) (p) = g(\sigma(p))$$
so
$$
\sigma(f)(p) = 0 \iff f( \sigma(p)) = 0 \iff \sigma(p)\in C \iff p\in \sigma^2(C).
$$
Therefore, the divisor of $\sigma(f)$ is $3\sigma^2(C)$ and the divisor of $\sigma^2(f)$ is $3\sigma(C)$. The (Weil) divisor of $z$ on $X/C_2$ makes sense and is given by $\sigma^2(C) - C$.  
\end{rmk}

\medskip
Since we know what $\{f=0\}$ looks like locally at singular points, we see that the cyclic triple cover $Z$ given above is smooth and $X/C_2$ is $Z/C_3$ for the covering $C_3$. We are interested in describing $Z$ explicitly by the equations on sections of $H^0(Z,2K_Z)$. 

\medskip
Note that $2K_{Z/C_3}$ is $\mathcal O(1)$. The covering $C_3$ induces an action on $H^0(Z,2K_Z)$ which splits it into three eigenspaces. The invariant subspace is naturally identified with the global sections of $H^0(X/C_2, \mathcal O(1))$ and has a basis $\{W_0,W_1,W_2,W_3\}$.

\medskip
The additional sections in $H^0(Z,2K)$  for the two other eigenspaces of the covering $C_3$ action on $Z$ can be thought of as  spaces of  $z g$ or $z^{-1} g$ with $g$ a meromorphic section of $\mathcal O(1)$ on $Z$ which is a pullback of one from $X/C_2$. For the first of these eigenspaces, the condition of holomorphicity of $zg$ is 
 $\ord_D(g) \geq -\ord_D(z)$ for all divisors $D$. This means that $g$ is a section of the reflexive sheaf 
 $\mathcal O(1)( \sigma^2( C ) - C)$ 
on $X/C_2$, in view of Remark \ref{zdiv}. As a consequence, $g \,d$ will be a section of $\mathcal O(3)( \sigma^2(C) - C)$ which vanishes at 
$C + \sigma(C) + \sigma^2(C)$, i.e. a section of $\mathcal O(3)(- 2C - \sigma(C))$. This  can be identified as the linear subspace of $H^0(X,\mathcal O(3))^{C_2}$ cut out by conditions of vanishing twice at $C$ and once at $\sigma(C)$.
These are readily calculated numerically by finding a number of random points on these curves.
The other eigenspace of the covering $C_3$ is determined similarly, and we see that 
$H^0(Z,2K_Z)$ is naturally identified with
$$
H^0(X,\mathcal O(1))^{C_2} \bigoplus 
\Big(z\, d^{-1}H^0(X,\mathcal O(3)(-2C - \sigma(C)))^{C_2} \Big)
 \bigoplus 
\Big(z^{-1} d^{-1}H^0(X,\mathcal O(3)(-2\sigma^2(C) - \sigma(C)))^{C_2} \Big)
$$
One can write these sections as rational functions in $W_0,\ldots, W_7$ of total degree one and evaluate them on
points of $X$. When looking for relations among them, one must ensure that the total degree in $z$ is divisible by $3$ and that $z^3$ is converted into $\sigma(f) f^{-1}$ (or alternatively one can construct multiple points on $Z$).

\medskip
We would like to have the basis of $H^0(Z,2K_Z)$ which is nice with respect to the lift $C_3$ action. Observe 
that 
$$
\begin{array}{rl}
\sigma_{\rm lift}\Big(
z \,d^{-1}H^0(X,\mathcal O(3)(-2C - \sigma(C))^{C_2}
\Big)
&=
(z  \,d\,  \sigma(f)^{-1} )d^{-1}H^0(X,\mathcal O(3)(\sigma^2(-2C - \sigma(C)))^{C_2}
\\
&=z\,d^{-1} \left( d \,\sigma(f)^{-1}\right) H^0(X,\mathcal O(3)(-2\sigma^2(C) - C))^{C_2}
\\
&=z \,d^{-1} H^0(X,\mathcal O(3)((2\sigma^2(C) - \sigma(C)- C )-2\sigma^2(C) - C))^{C_2}
\\
&=z \,d^{-1} H^0(X,\mathcal O(3)(- 2 C - \sigma(C))^{C_2}
\end{array}
$$
and similarly for $z^{-1} d^{-1}H^0(X,\mathcal O(3)(-2\sigma^2(C) - \sigma(C)))^{C_2} $.
So we can take one of the elements of the eigenspace and make the others by applying $\sigma_{\rm lift}$. 
This gives us a basis
$(P_0,\ldots, P_9)$ of $H^0(Z,2K_Z)$ which has $C_3\times C_3$ action given by
\begin{align*}
\sigma_{\rm covering} (P_0,\ldots,P_9) =& \,(P_0,P_1,P_2,P_3,  {\rm e}^{\frac 43 \pi \ii}P_4,  {\rm e}^{\frac 43 \pi \ii} P_5,  {\rm e}^{\frac 43 \pi \ii} P_6,  {\rm e}^{\frac 23 \pi \ii} P_7,  {\rm e}^{\frac 23 \pi \ii} P_8,  {\rm e}^{\frac 23 \pi \ii}P_9),
\\
\sigma_{\rm lift}(P_0,\ldots,P_9) =& \,(P_0,P_2,P_3,P_1,P_5,P_6,P_4,P_8,P_9,P_7).
\end{align*}

\medskip
We have implemented the above in \cite[Section5.nb]{support}. 

\begin{rmk}
We were hampered slightly by the lack of projective normality of $X$. In fact, only a codimension two subspace of even sections of $H^0(X,\mathcal O(3)$ can be written as a polynomial in $W_0,\ldots, W_7$. Fortunately, this was still enough to find one element in each of the three-dimensional eigenspaces of $H^0(Z,2K_Z)$, and then $\sigma_{\rm lift}$ gave us the basis of the space.
\end{rmk}

 \begin{rmk}
 As has been observed before, one of the difficulties is finding a good basis of sections of $2K$ and a good basis in the space of equations, in order to have the coefficients of manageable length. After we found the equations, we have made a linear change of variables with coefficients in $\QQ[\sqrt{-15}]$ to get a nicer description of the fixed points of $C_3\times C_3$ while preserving the shape of the group action. This has only lead to a moderate improvement in the size of the coefficients. This is also implemented in \cite[Section5.nb]{support}, with the resulting file  \cite[EqsFppd3D3]{support}
 \end{rmk}

\begin{rmk}
We also use the knowledge of $f$ and $d$ to construct the double cover of 
 $\PP^2_{fake}$  obtained from $X$ by  attaching the above function $z$. This double cover is given in its $2K$ embedding by 
 $20$ variables and $100$ quadratic equations. It will be very useful in the next section.
\end{rmk}

Since our computations often involved approximate points, and one should generally be wary of long computer code (that took a fair bit of time to debug), we have spent some time directly verifying that our $84$ cubic equations cut out a fake projective plane. 
Specifically, we saved the equations in the Magma format and over the field 
$\QQ(\sqrt {-15})$ that the $84$ cubic relations  cut out a scheme with Hilbert polynomial $p(n) = 18 n^2 - 9 n +1$, as expected. We then verified that they generate a prime ideal $I$ and that $H^1(Z,\mathcal O_Z)=H^2(Z,\mathcal O_Z)=0$
by working over a finite field and using semicontinuity.

\medskip
We also verify smoothness as follows. The scheme in question is smooth if and only if the radical of the ideal generated by $I$ and the $7\times 7$ minors of the $84\times 10$ Jacobian matrix $Jac$ of the generators of $I$ is 
the irrelevant ideal. It is is impossible to calculate all of the minors in any reasonable amount of time, even over a finite field. We used the following trick. We picked random $84\times 84$ and $10\times 10$ matrices $A$ and $B$ over a finite field and looked at the first $7\times 7$ minor of the matrix $ A \,Jac \, B$ to get a reasonably generic linear combination of the minors of $Jac$.  We repeated it three times, added the resulting minors to $I$ and verified that the resulting ideal has zero Hilbert polynomial. We had to use a powerful computer cluster (the "Galois" server based at Warwick University) in order to do this calculation in a reasonable amount of time. The relevant Magma code is in \cite[EqsFPPd3D3Magma  and  EqsFPPd3D3MagmaFinite]{support}.

\medskip
Once we know that $Z$ is a smooth surface with $H^1(Z,\mathcal O_Z)=H^2(Z,\mathcal O_Z)=0$, the Hilbert polynomial implies that the hyperplane class $D$ on $Z$ satisfies $D^2 = 36, K_Z D =  18$. To show that $Z$ is a fake projective plane and $D$ is twice the canonical class we only needed to show that $\chi(Z,\mathcal O(2K_Z))=10$
and $h^0(Z,\mathcal O_Z(D-K))=0$  as in \cite{BorKeum}.  We used the "Galois" server and Macaulay2 for this calculation.


\section{Constructing five more pairs of fake projective planes}\label{sec.6}
Cartwright and Steger \cite{CSlist} have discovered the following ball quotients in the same commensurability class as the fake projective plane we have found so far. The labels are the shorthand for their more extensive notation. For example, the FPP we have found is 
FPP: (C2,p=2,$\emptyset,d_3 D_3)$ in the notation of \cite{CSlist} and is $d_3D_3$ in the diagram below.
The quotient of the complete intersection of Grassmannian in Section \ref{sec.3} is $D_3$.

\smallskip
All of the arrows correspond to degree three maps. The thicker arrows indicate Galois covers, i.e. quotient maps for some $C_3$ action.

\begin{tikzpicture}

    \draw (-1,4)--(-1,4) node[above] {FPP:};
    \draw (-1,2.1)--(-1,2.1) node[below] {FPP/3:};
    
   \draw (0,4)--(0,4) node[above] {$X_9$};
   \draw (2,4)--(2,4) node[above] {$(d^2D)_3X_3$};
   \draw (4,4)--(4,4) node[above] {$(dD)_3X_3$};
   \draw (6,4)--(6,4) node[above] {$D_3X_3$};
   \draw (8,4)--(8,4) node[above] {\textcolor{blue}{$d_3D_3$}};
   \draw (10,4)--(10,4) node[above] {$X_3'$};   
     
  \draw[line width=.5pt,-stealth] (0,3.9)--(0,2.1) node[below]{$X_3$};
  \draw[line width=.5pt,-stealth] (2,3.9)--(2,2.1) node[below]{$(d^2D)_3$};
  \draw[line width=.5pt,-stealth] (4,3.9)--(4,2.1) node[below]{$(dD)_3$};
  \draw[line width=.5pt,-stealth] (6,3.9)--(6,2.1) node[below]{\textcolor{blue}{$D_3$}};
  \draw[line width=1pt,-stealth] (8,3.9)--(8,2.1) node[below]{$d_3$};
    \draw[line width=.5pt,-stealth] (9.9,3.9)--(8.1,2.1);
  
   \draw[line width=1pt, -stealth] (1.9,3.9)--(0.1,2.1) ;
    \draw[line width=1pt,  -stealth] (3.9,3.9)--(0.2,2.1) ;
     \draw[line width=1pt,  -stealth] (5.9,3.9)--(0.3,2.1) ;
     
    \draw[line width=1pt, -stealth] (7.7,3.9)--(2.1,2.1) ;
    \draw[line width=1pt,  -stealth] (7.8,3.9)--(4.1,2.1) ;
     \draw[line width=1pt,  -stealth] (7.9,3.9)--(6.1,2.1) ;
        
    \draw[line width=.5pt,-stealth] (0.2,1.6)--(3.7,.1) ;
    \draw[line width=1pt,  -stealth] (2.1,1.6)--(3.9,.1) ;
    \draw[line width=1pt,  -stealth] (4.0,1.6)--(4.0,.1) node[below] {FPP/9};
    \draw[line width=1pt,  -stealth] (5.9,1.6)--(4.1,.1) ;
    \draw[line width=1pt,  -stealth] (7.8,1.6)--(4.3,.1) ;
   
\end{tikzpicture}

We will now describe the main idea used to construct the other surfaces in the above diagram and, in particular, all of the other fake projective planes in this class.

\medskip
Suppose that we have constructed (i.e. have explicit equations of) a fake projective plane $P_1$ with a $C_3$ action and suppose that there is another FPP $P_2$ which covers the quotient of $P_1/C_3$ as a \emph{non-Galois} triple cover. Then we can try to construct $P_2$ as follows.

\medskip
At the level of the fundamental groups, we have 
$P_i= \mathcal B^2/\Gamma_i$ where $\Gamma_1$ and $\Gamma_2$ are index three subgroups of a larger group $\Gamma$ that corresponds to $P_1/C_3$. However, $\Gamma_1$ is a normal subgroup of $\Gamma$ and $\Gamma_2$ is not. The kernel of the action of $\Gamma$ on the cosets of $\Gamma_2$ is a normal subgroup $\Gamma_3$ of $\Gamma$ of index six with $\Gamma_3/\Gamma \cong S_3$, which is contained in $\Gamma_2$. It is not contained in $\Gamma_1$  and, therefore, the intersection $\Gamma_4=\Gamma_1\cap \Gamma_3$ is a normal subgroup of $\Gamma$ of index $18$ which is contained in both $\Gamma_1$ and $\Gamma_2$. 

\medskip
In terms of the surfaces, we have a smooth surface $P_4=\mathcal B^2/\Gamma_4$ which is a six-fold unramified cover of both $P_1$ and $P_2$. There is an action of $S_3\times C_3$ on $P_4$ such that $P_1$ is the quotient of $P_4$ by $S_3$ and $P_2$ is the quotient of $P_4$ by $C_2\times C_3$ where $C_2$ is a subgroup of $S_3$. Both of these quotients are unramified. We will also consider a double cover
of $P_1/C_3$ which corresponds to taking quotient of $P_4$ by $C_3\times C_3$. We will denote this (singular) surface by $\widehat {P_1/C_3}$.

\medskip
We have, $\chi(P_4,\mathcal K_{P_4}) = 6 \chi(P_1,\mathcal K_{P_1}) = 6$. 
We will assume \footnote{This is a natural assumption, which gets justified a posteriori by the success of the process.}
 that the surface $P_4$ is regular, so the dimension of $V=H^0(P_4,K_{P_4})  $ is exactly $5$.
The Holomorphic Lefschetz formula implies that the traces of the action on $V$ of the nonidentity elements of $S_3$ and $C_3$ are $-1$ due to the trivial action on $H^2(P_4, K_{P_4})$. Therefore, as an $S_3$ representation, $V$ is isomorphic to the direct sum
$$
V= V_1 \oplus V_2 \oplus V_2'
$$
where $V_1$ is  the one-dimensional sign representation and $V_2$ and $V_2'$ are two copies of the dimensional representation of $S_3$. Since the actions of $C_3$ and $S_3$ commute, the $C_3$-eigenspaces must be representations of $S_3$. The dimensions of weight $1, w, w^2$ eigenspaces are $1,2,2$ respectively, so we may assume that $V_1$ is trivial under $C_3$ action and $V_2$ and $V_2'$ are eigenspaces of weight $w$ and $w'$. 

\medskip
The dimension two representation of $S_3$ has a basis $(r_1,r_2)$ such that $(1,2,3) r_1 = w r_1$, $(1,2,3) r_2 = w^2 r_2$, $(1,2) r_1 = r_2$, $(1,2) r_2 = r_1$.  Consider such basis $(r_1, r_2)$ of $V_2$ and similarly $(r_1',r_2')$ of $V_2'$. The following observation is key.

\begin{proposition}\label{si}
In these notations,  $s_1=r_1 r_2'$ and $s_2= r_2 r_1'$
are (pullbacks of) elements of $H^0(\widehat {P_1/C_3},2K_{\widehat {P_1/C_3})})$.
Moreover, the covering involution on $\widehat {P_1/C_3}\to  {P_1/C_3}$ permutes them.
Similarly,
$s_3=r_1 r_2$ and $s_4=r_1' r_2'$ are (pullbacks of) sections of $H^0(P_1, 2K_{P_1})$
of weights $w^2$ and $w$ with respect to the $C_3$ action on $P_1$.
The following equation holds on
$P_1/C_3$:
$$
s_1s_2 =  s_3 s_4.
$$
\end{proposition}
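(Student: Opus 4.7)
The key principle is that for a finite group $G$ acting on a smooth variety $Y$, pullback of sections of $mK$ on $Y/G$ identifies with the $G$-invariant subspace of $H^0(Y, mK_Y)$ (an isomorphism when the action is free, and still an injection on the invariant side in general). I will apply this to the three intermediate quotients of $P_4$ that arise here: $P_1 = P_4/S_3$, $\widehat{P_1/C_3} = P_4/(\langle(1,2,3)\rangle \times C_3)$, and $P_1/C_3 = P_4/(S_3 \times C_3)$, where the second $C_3$ denotes the extra $C_3$ factor of $S_3 \times C_3$.

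For each $s_i$ the verification is then a direct character calculation against the given action on $(r_1, r_2, r_1', r_2')$. For $s_1 = r_1 r_2'$, the rotation $(1,2,3) \in S_3$ scales by $w \cdot w^2 = 1$, and the generator of the extra $C_3$, which acts with weight $w$ on $V_2$ and weight $w^2$ on $V_2'$, also scales by $w \cdot w^2 = 1$; the same pair of equalities works for $s_2 = r_2 r_1'$. Hence $s_1$ and $s_2$ are $(\langle(1,2,3)\rangle \times C_3)$-invariant and therefore pullbacks from $\widehat{P_1/C_3}$. The nontrivial coset of $\langle(1,2,3)\rangle$ in $S_3$ is represented by the transposition $(1,2)$, which by hypothesis swaps $r_1 \leftrightarrow r_2$ and $r_1' \leftrightarrow r_2'$, and so visibly exchanges $s_1$ and $s_2$, proving the covering-involution claim. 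For $s_3 = r_1 r_2$ and $s_4 = r_1' r_2'$, both three-cycles and transpositions in $S_3$ preserve each product (the three-cycle multiplies by $w \cdot w^2 = 1$, the transposition simply reorders the factors), so $s_3$ and $s_4$ are $S_3$-invariant and descend to $P_1$; the extra $C_3$ then acts by $w \cdot w = w^2$ on $s_3$ and by $w^2 \cdot w^2 = w$ on $s_4$, giving the stated weights.

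The remaining identity $s_1 s_2 = s_3 s_4$ is tautological inside the symmetric algebra on $V$: both sides equal $r_1 r_2 r_1' r_2'$. Since the natural map $\mathrm{Sym}^{\bullet} V \to \bigoplus_m H^0(P_4, mK_{P_4})$ is a ring homomorphism, this identity persists as sections of $4K$ on $P_4$, and by the character count in the previous paragraph both sides are $S_3 \times C_3$-invariant and so pull back from a single section of $4K_{P_1/C_3}$, proving the equality on $P_1/C_3$. There is no serious obstacle here beyond bookkeeping: once the decomposition $V = V_1 \oplus V_2 \oplus V_2'$ and the commuting actions on the chosen basis are set down, every clause of the proposition reduces to a one-line verification of weights or a tautology in $\mathrm{Sym}^{\bullet} V$.
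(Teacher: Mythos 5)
Your proof is correct and takes essentially the same approach as the paper, which simply asserts the claim ``follows immediately from the description of the group action on $r_i$''; you have unwound exactly that: the weight and permutation bookkeeping for $(1,2,3)$, $(1,2)$ and the extra $C_3$ on $r_1,r_2,r_1',r_2'$, plus the observation that $s_1s_2$ and $s_3s_4$ are the same element $r_1r_2r_1'r_2'$ of $H^0(P_4,4K_{P_4})$.
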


\begin{proof}
This statement follows immediately from the description of the group action on $r_i$.
\end{proof}

\medskip
Proposition \ref{si} gives us a way to find the divisors of  $r_i$ and $r_i'$.
Namely, we can solve  for all linear relations on 
$$
\Sym^2 \Big(H^0(\widehat {P_1/C_3},2K_{\widehat {P_1/C_3}})\Big) \bigoplus \Big(
H^0(P_1, 2K_{P_1})_{w^2}\otimes H^0(P_1, 2K_{P_1})_{w}
\Big)
$$
and impose the conditions that the right hand side is decomposable (the corresponding matrix is of rank one) and the left hand side is of rank at most two.
This allows us to find $s_1,\ldots, s_4$ and then the divisors of $r_1,\ldots, r_2'$. For example, we can find $r_1=0$ as the intersection of $s_1=0$ and $s_3=0$.

\medskip
Then we can construct the surface $P_4$ and finally get $P_2$ as its quotient. Specifically, the difference between the divisors of $r_1$ and $r_2$ is an order three torsion bundle on the double cover of FPP/3. In order to construct the FPP, we will consider the sections of $2K$ on this triple cover as follows.
We can look at sections of $3K$ that vanish on $r_1=0$ and others at $r_2=0$.  This requires constructing sections of $3K$. One way is to look at sections of $4K$ which are zero on the anti-invariant section of $K$ (all of this is done on the double cover  $\widehat {P_1/C_3}$ of FPP/3).

\medskip
We have used this method successfully to construct five more pairs of fake projective planes, see \cite[Section6D3X3.nb,
Section6twin.nb,
Section6X3prime.nb, Section6X9.nb
]{support}.
Specifically, we used the $C_3\times C_3$ action on the FPP $d_3D_3$ to get $D_3X_3$ (and its unramified double cover), as well as $(dD)_3X_3$ and $(d^2D)_3X_3$ (twin FPPs) and $X_3'$. Then we used $D_3X_3$ surface to get $X_9$. 
The verification of the smoothness and the fact that these are indeed FPPs was done similarly to the $d_3D_3$  case in Section \ref{sec.5}.

\begin{rmk}
There were some technical issues that we were not able to resolve to our complete satisfaction. 
Specifically, our method a priori produces equations over an algebraic extension of the original field $\QQ(\sqrt{-15})$, often by adding the cube root of unity $w$. We were able to find an appropriate linear change of coordinates to get the field to be $\QQ(\sqrt{-15})$ for  $D_3X_3$. The twin pairs of 
$(d^2D)_3X_3$ and $(dD)_3X_3$ are defined over $\QQ(\sqrt{-15},\sqrt{-3})$, and we are unable to distinguish one from the other with our method. We are hopeful that the FPPs $X_3'$ and $X_9$ can be defined over $\QQ(\sqrt{-15})$, but we were unable to find a linear change of the coordinates to do so. We were also unable to successfully control the size of the coefficients. For example, coefficients for $X_9$ are several thousand decimal digits long in the natural basis $\{1,\sqrt 5,\sqrt{-3},\sqrt{-15}\}$.
\end{rmk}

\section{Further directions}\label{sec.7}
In this section we list several open problems that we have not addressed in our research, together with plausible approaches to them.

\medskip
\begin{itemize}
\item
It would be interesting to find a way to distinguish between the two surfaces $(d^2D)_3 X_3$ and $(dD)_3 X_3$ and to match our equations with the calculations of Cartwright and Steger. One approach could be based on finding the fundamental group of our FPPs. It can be done by picking a base point, then making a hyperplane cut and looking at generators coming from the fundamental groups of the corresponding complex curves. However, we have not attempted it and do not know if this is feasible.
\\[0.2em]

\item 
Unfortunately, we were only able to construct the equations of the fake projective planes $X_3'$ and $X_9$ with coefficients in the field
$\Q(\sqrt{-15},\sqrt{-3})$. However, it may be possible to find such equations with coefficients in
$\Q(\sqrt{-15})$. In a similar vein, we would like to be able to find shorter equations of all of the surfaces involved, even though this might be impossible.\\[.2em]

\item 
It has been conjectured that fake projective planes do not have any effective curves in classes $H$ and $2H$, up to torsion. This question can, in theory, be addressed for the fake projective planes we have constructed in this paper as follows. As a byproduct of our computation of the triple cover $d_3D_3\to D_3$, we have found the sections of $4H$ on the fake projective plane $d_3D_3$. We can also trace the $C_{14}$ torsion in the Picard group of this fake projective plane to the Grassmannian construction.  This should, in theory, allow one to verify the conjecture for $d_3D_3$. We have not attempted to verify the torsion calculations of Cartwright and Steger for the other FPPs.
One approach is the following. If a fake projective plane has a double cover $Y\to \PP^2_{fake}$, then all but one nontrivial torsion divisor classes $L$ on $\PP^2_{fake}$ give rise to the spaces $H^0(Y, K_Y+L)$  and $H^0(Y, K_Y-L)$ of dimension at least two, with each eigenspace of dimension at least one (the dimensions of the eigenspaces are exactly one iff $h^1(Y,K_Y\pm L )=0$). If we pick sections 
$u_{\pm L,\pm}$ in the corresponding eigenspaces, then 
$$
s_1 := u_{L,+} u_{-L,+},~s_2 := u_{L,-} u_{-L,-},~s_3 := u_{L,+} u_{-L,-},~s_4 := u_{L,-} u_{-L,+}
$$ 
satisfy 
\begin{equation}\label{eq1234}
s_1 s_2 = s_3 s_4
\end{equation}
with $s_1$ and $s_2$ even sections of $H^0(Y, K_Y)$ and $s_3$ and $s_4$ odd sections of  $H^0(Y, K_Y)$. Thus we may look for such relations to uncover torsion line bundles. It is not clear if this approach is feasible in practice. For example, one can define  $t_1=s_1+s_2, t_2=s_1-s_2, t_3=s_3+s_4, t_4=s_3-s_4$ to rewrite \eqref{eq1234} as 
$$t_1^2 - t_2^2 = t_3^2 - t_4^2.$$
Each $t_i$ is in a linear space of dimension $10$, so we end up with quadratic equations in $40$ variables. If $\PP^2_{fake}$ has additional symmetries one could likely  restrict their attention to some eigenspaces in \eqref{eq1234}.\\[.2em]

\item
This paper got its start in the construction of surfaces with $p_g=q=0,~K^3=3$ and fundamental group $C_{14}$. There are many other surfaces with these numerical invariants but a different fundamental group that also come from smoothing away the singularities of $C_3$ quotients of fake projective planes. One can try to emulate this construction to get such surfaces as complete intersections in homogeneous varieties.\\[.2em]

\item
In the opposite direction, we have constructed multiple fake projective planes with $C_3$ action. It is worthwhile to try to deform the FPP/3 surfaces $X_3$, $(d^2D)_3$, $(dD)_3$ and $d_3$ to get smooth surfaces with $K_S^2=3$ and $p_g=q=0$ with other fundamental groups. \\[.2em]

\item 
We have technically not proved that the fake projective planes we have constructed are non-isomorphic to each other. It is clear from our method, but some of the intermediate calculations are done with random points, which are less certain than symbolic computations. There does not appear to be a simple a posteriori calculation that would establish it, although computing various invariants may suffice. A natural approach of looking for linear changes of variables in $|2K|$ embedding would lead to equations in $99$ variables, which is far beyond what is currently feasible, even over a finite field.
\\[.2em]

\end{itemize}

\section{Appendix. Equations of $X$}\label{app}
Equations of the surface $X$ which is a double cover of $\PP^2_{fake}/C_3$ in its bicanonical embedding are the following. First, there are three even quadratic equations, where
we use the notation 
$\sigma
(W_0:\cdots:W_7)
=
(W_0:W_2:W_3:W_1:W_5:W_6:W_4:W_7)
$.
$$
\tiny
\begin{array}{rl}
eq_1=&(53321  - 119409 \ii \sqrt{15} )W_0 W_1 + 799064 W_1^2+( - 140437 + 
 134429 \ii \sqrt{15}) W_0 W_2 +( 10514 - 1103970 \ii \sqrt{15} )W_1 W_2 + 
 (87116 
 \\&
 - 15020 \ii \sqrt{15} )W_0 W_3 +(- 4478964 + 
 441588 \ii \sqrt{15} )W_1 W_3 +( 4468450 + 662382 \ii \sqrt{15} )W_2 W_3 - 
 799064 W_3^2 - 799064 W_4^2+
 \\&
 ( - 1461446 + 31542 \ii \sqrt{15}) W_4 W_5 
 +(-  94626  + 346962 \ii \sqrt{15} )W_4 W_6 + (1556072  - 
 378504 \ii \sqrt{15} )W_5 W_6 + 799064 W_6^2 + (3552360 
 \\&
  - 
 255192 \ii \sqrt{15}) W_4 W_7+( - 331128  + 358680 \ii \sqrt{15} )W_5 W_7
 +( -  3221232  - 103488 \ii \sqrt{15} )W_6 W_7\\[.2em] 
 eq_2=&\sigma(eq_1)\\[.2em]
    eq_3=&\sigma^2(eq_1)  \\[.2em]
 \end{array}
 $$
 Then there are two odd cubic equations (in addition to products of odd variables and even quadratic equations). These equations happen to be $C_3$-invariant.
 $$
 \tiny
\begin{array}{rl}
eq_4=&
( -98948224478443260  - 3058298456825380 \ii \sqrt{15}) W_0( W_1 W_4 +W_2 W_5 + W_3 W_6)
 +( - 
 1690952291170184385  + 
  \\&
 369304304128884495 \ii \sqrt{15} )W_4 W_5 W_6
 +
 (81462632898956280  - 23953315200421720 \ii \sqrt{15} )W_0 (W_2 W_4 + W_3 W_5 + W_1 W_6)
 \\&
 - 
 (1094836267717214400 + 
 398810644281337680 \ii \sqrt{15} )(W_1 W_2 W_4 +W_2 W_3 W_5 + W_1 W_3 W_6)
 +( - 1543563136889220  - 
 \\&
 50501498192304700 \ii \sqrt{15} )W_0 (W_3 W_4 +W_1 W_5 + W_2 W_6)+ ( - 
 1695126293066323845
 - 
 246880524310909965 \ii \sqrt{15}) (W_2 W_3 W_4
 \\& + W_1 W_3 W_5 + W_1 W_2 W_6)
 +(- 
 551526491689636680 + 326575488081046680 \ii \sqrt{15} )(W_3^2 W_4 + W_1^2 W_5 + W_2^2 W_6)
 + 
 \\&
 138080956729877280 (W_3^2 W_5 + W_1^2 W_6 + W_2^2 W_4)
 + 
 (1373637819762610920 + 
 199151284945609080 \ii \sqrt{15} )(W_2 W_3 W_6 + W_1 W_3 W_4 
 \\&+ W_1 W_2 W_5)
 +( - 5328591207840000  + 
2144900495488320 \ii \sqrt{15} W_0^2 W_7 )
 +(- 
 688957430544122040 
 \\&
  -
 158946397497914040 \ii \sqrt{15}) (W_1^2+W_2^2 +W_3^2) W_7 
 + (284310840392140950 + 
 106996549176297210 \ii \sqrt{15} )W_0 W_7( W_1 + W_2+ W_3 )
 \\&
 +( 
 1200158218487790765 
 - 
 641150643161964915 \ii \sqrt{15} )(W_1 W_2 +W_1 W_3 + W_2 W_3) W_7 
 + 
 (1808067319615295865
 \\&
 - 
 379632318487526223 \ii \sqrt{15} )(W_4 W_5 + W_4 W_6 +W_5 W_6)W_7  +( - 
 1999928222013559005  
+ 
 443220177629557755 \ii \sqrt{15} )(W_4 +W_5
 \\&
 +W_6)W_7^2
  +( 2471992417171938465 
 - 
 903652627464942327 \ii \sqrt{15}) W_7^3
 \\[.2em] 
 eq_5=&
  (4624568052886208 + 1836658084768192 \ii\sqrt{15}) W_0 (W_1 W_4 + W_2 W_5 + W_3 W_6)
 +(- 25723910738105944
 \\&
  - 4926352855196696 \ii\sqrt{15} )W_0 (W_2 W_4 + W_3 W_5 + W_1 W_6)
 +( 307036825631433576  + 57793461876095400 \ii\sqrt{15} )(W_1 W_2 W_4
 \\&
  +W_2 W_3 W_5+W_1 W_3 W_6)
 +(- 42812288376882136 - 3212847068911256 \ii\sqrt{15} )W_0 (W_3 W_4+ W_2 W_5 + W_1 W_6)
  +
  \\&
  ( 146468880652808448  -  67854829136903424 \ii\sqrt{15} )(W_1 W_3 W_4 +W_1 W_2 W_5+W_2 W_3 W_6)
  +(-  263825659127933973  
  \\&
  + 97123836640946787 \ii\sqrt{15} )(W_2 W_3 W_4 +W_1 W_3 W_5+W_1 W_2 W_6 )
 +( 244274695087257576  +  52031300557563432 \ii\sqrt{15} )(W_3^2 W_4 
 \\&
 +W_1^2 W_5 +  W_2^2 W_6)
 + 1753408974347648 W_0^2 (W_4 + W_5 + W_6)
 +  (413162544996145035 + 86812915194403587 \ii\sqrt{15} )W_4 W_5 W_6
 \\&
 +( - 2219261238917376  +  41371720329984 \ii\sqrt{15} )W_0^2 W_7 
  +( 112661548728436848 - 15940846449910032 \ii\sqrt{15} )W_0 W_7(W_1+W_2 +W_3) 
 \\&
 +(-  668268262244236563  -  103644995514656715 \ii\sqrt{15} )(W_1 W_2 + W_2 W_3 +W_1 W_3)W_7 
+( - 123783199950329640 
\\&
+ 32171290678282392 \ii\sqrt{15} )(W_1^2+W_2^2+W_3^2) W_7 
 +( -  574830176340661227- 133758869453715555 \ii\sqrt{15} )( W_4 W_5  + W_5 W_6 
 \\&
 + W_4 W_6)W_7 
 + ( 642303707483671947 W_4 W_7^2 + 134647945440329475 \ii\sqrt{15} (W_4+W_5+W_6) W_7^2 
+(-  1138685084396455995 
\\&
- 184628475341453619 \ii\sqrt{15}) W_7^3
  \end{array}
 $$
 Finally, there are four even cubic equations (again, in addition to products of even variables and degree two equations).
 $$
  \tiny
\begin{array}{rl}
 eq_6=&(-1666764770896080 + 24780099758160 \ii\sqrt{15}) W_0^3 + (-26655160103850225 - 
    141099466791655 \ii\sqrt{15}) W_0^2 W_1 + (68978091270130770 
  \\&
  - 
    43638746552374050 \ii\sqrt{15}) W_0 W_1^2 + (-14960300456911665 + 
    243600535544345 \ii\sqrt{15}) W_0^2 W_2 + (-136997854837433730 
     \\&
      + 
    100084814202953370 \ii\sqrt{15}) W_0 W_1 W_2 + (-2166930718788380580 - 
    114990006031685820 \ii\sqrt{15}) W_1^2 W_2 + (188045734378652430
      \\& - 
    11700924076935870 \ii\sqrt{15}) W_0 W_2^2 + (-334721551455006660 + 
    371044998805248180 \ii\sqrt{15}) W_1 W_2^2 + (-3471146893213005
      \\& + 
    3488453404965845 \ii\sqrt{15}) W_0^2 W_3 + (370759844061735240 + 
    142387946334768960 \ii\sqrt{15}) W_0 W_1 W_3 + (1178593751048740740
      \\& + 
    258386774372850060 \ii\sqrt{15}) W_1^2 W_3 + (337733550357073080 - 
    23905566679900920 \ii\sqrt{15}) W_0 W_2 W_3 + (3735489413393712765 
      \\&- 
    810504711260264475 \ii\sqrt{15}) W_1 W_2 W_3 + (-1479640375942060500 - 
    541366284526196220 \ii\sqrt{15}) W_2^2 W_3 + (88187928349336200 
      \\&+ 
    45648803952015264 \ii\sqrt{15}) W_0 W_4 W_5 + (-25167050881043400 + 
    54618239319319368 \ii\sqrt{15}) W_1 W_4 W_5 + (691467466176945360 
      \\&- 
    135405264642391296 \ii\sqrt{15}) W_2 W_4 W_5 + (731586446190101565 - 
    157420661770451835 \ii\sqrt{15}) W_3 W_4 W_5 + (-22789739259859320
      \\& + 
    14076296148587904 \ii\sqrt{15}) W_0 W_4 W_6 + (471919011020770320 - 
    286625927180712672 \ii\sqrt{15}) W_1 W_4 W_6 + (-52065650645825355 
      \\&- 
    336326621647458771 \ii\sqrt{15}) W_2 W_4 W_6 + (-71375582659697820 + 
    34183213308069084 \ii\sqrt{15}) W_0 W_5 W_6 + (544930562815031925 
      \\&- 
    262568409537346227 \ii\sqrt{15}) W_1 W_5 W_6 + (-773851724743350960 - 
    167052236438578176 \ii\sqrt{15}) W_2 W_5 W_6 + (-83927877818450400
      \\& - 
    17377009952576400 \ii\sqrt{15}) W_0 W_4 W_7 + (-36849898243929960 + 
    28649673281993400 \ii\sqrt{15}) W_1 W_4 W_7 + (-549479940520862295 
      \\&+ 
    243736669319493105 \ii\sqrt{15}) W_2 W_4 W_7 + (-796822141111243695 + 
    45136039971948345 \ii\sqrt{15}) W_3 W_4 W_7 + (53307405771262020 
      \\&
      - 
    21625406694472140 \ii\sqrt{15}) W_0 W_5 W_7 + (832604580620480745 + 
    48931912928647665 \ii\sqrt{15}) W_1 W_5 W_7 + 
 356264465750010720 W_2 W_5 W_7 
   \\&+ (-313200156644634105 + 
    130270143553698015 \ii\sqrt{15}) W_3 W_5 W_7 + (42156434307162420 + 
    3936635595601380 \ii\sqrt{15}) W_0 W_6 W_7 
      \\&+ (-1166733390875518935 + 
    266638938070257585 \ii\sqrt{15}) W_1 W_6 W_7 + (988209940857367095 + 
    425481801320871855 \ii\sqrt{15}) W_2 W_6 W_7 
      \\&+ (-142547635521932760 - 
    5281782721276824 \ii\sqrt{15}) W_0 W_7^2 + (232999883627690025 - 
    74107682752546863 \ii\sqrt{15}) W_1 W_7^2 
      \\&+ (-876915653107092165 - 
    319487180407733709 \ii\sqrt{15}) W_2 W_7^2 + (314943079584183435 - 
    119573778905614845 \ii\sqrt{15}) W_3 W_7^2
  \\[.2em]
   eq_7=&\sigma(eq_6)\\[.2em]
    eq_7=&\sigma^2(eq_6)\\[.2em]
   eq_9=&
   (324554939451 - 70795097523 \ii\sqrt{15}) W_0^3 
   + (1529614704078 -  1205210352894 \ii\sqrt{15}) W_0^2 (W_1 +W_2+W_3)
   + (-5415939170712 
     \\&
     + 1474501654360 \ii\sqrt{15}) W_0 (W_1^2+W_2^2+W_3^2)
    + (-62405246503404 - 10367134749412 \ii\sqrt{15}) W_0( W_1 W_2 +W_2 W_3 +W_1 W_3) 
   \\&   
 +  180800780856192 (W_1^2 W_2 + W_2^2 W_3 + W_3^2 W_1)
    + (-146202216215328 - 15520405118400 \ii\sqrt{15})( W_1 W_2^2 + W_2 W_3^2 + W_3 W_1^2) 
     \\&
     + (-341956310464440 + 182430423393624 \ii\sqrt{15}) W_1 W_2 W_3   
     + (-24797684639448 - 4795122965976 \ii\sqrt{15}) W_0 (W_4 W_5 + W_5 W_6
       \\& + W_4 W_6)
       + (-36568462213584 + 31673565214704 \ii\sqrt{15}) (W_2 W_4 W_5 + W_3 W_5 W_6 + W_1 W_6 W_4)
     + (-58201519194000 
       \\&
       + 40317236988336 \ii\sqrt{15}) (W_3 W_4 W_5 + W_1 W_5 W_6 + W_2 W_ 6 W_4)
+ (19496920450896 +     2134605859344 \ii\sqrt{15}) W_0 W_7 (W_4 + W_5 +W_6) 
  \\&
  + (21063286582128 - 6992925745680 \ii\sqrt{15}) (W_1 W_4 + W_2 W_5 + W_3 W_6)W_7 
+ (99385147864044 -  46684295204148 \ii\sqrt{15}) (W_2 W_4
  \\& + W_3 W_5 + W_1 W_6)W_7 
+ (1823973493356 -  15659763667380 \ii\sqrt{15}) (W_3 W_4+ W_1 W_5 +W_2 W_6) W_7   
  + (-95094214488 
    \\&-  2876821724952 \ii\sqrt{15}) W_0 W_7^2 
  + (-38259379316088 +  23825658096072 \ii\sqrt{15}) (W_1+W_2+W_3) W_7^2   
   \end{array}
$$

\end{document}